\renewcommand*\env@matrix[1][*\c@MaxMatrixCols c]{%
  \hskip -\arraycolsep
  \let\@ifnextchar\new@ifnextchar
  \array{#1}}
\newenvironment{smallpmatrix}{\left(\begin{smallmatrix}}{\end{smallmatrix}\right)}
\theoremstyle{plain}
\newtheorem{thm}{Theorem}[section]
\newtheorem{prp}[thm]{Proposition}
\newtheorem{cor}[thm]{Corollary}
\newtheorem{lem}[thm]{Lemma}
\theoremstyle{definition}
\newtheorem{dfn}[thm]{Definition}
\newtheorem{ntn}[thm]{Notation}
\theoremstyle{remark}
\newtheorem{rmk}[thm]{Remark}
\newtheorem{exa}[thm]{Example}
\newcommand{\CC}{\mathbb{C}}
\newcommand{\QQ}{\mathbb{Q}}
\newcommand{\RR}{\mathbb{R}}
\newcommand{\ZZ}{\mathbb{Z}}
\newcommand{\NN}{\mathbb{N}}
\newcommand{\KK}{\mathbb{K}}
\newcommand{\B}{\mathcal{B}}
\newcommand{\T}{\mathcal{T}}
\newcommand{\M}{\mathsf{M}}
\newcommand{\bolda}{{\boldsymbol a}}
\newcommand{\set}[1]{{\left\{#1\right\}}}
\newcommand{\abs}[1]{{\left\vert#1\right\vert}}
\newcommand{\ideal}[1]{{\left\langle#1\right\rangle}}
\newcommand{\tup}[1]{{\left(#1\right)}}
\newcommand{\onto}{\twoheadrightarrow}
\newcommand{\xmid}{\;\middle|\;}
\DeclareMathOperator{\ch}{ch}
\DeclareMathOperator{\GL}{GL}
\DeclareMathOperator{\Hom}{Hom}
\DeclareMathOperator{\Sym}{Sym}
\title[Configuration polynomials]{Configuration polynomials\\under contact equivalence}
\author[G.~Denham]{Graham Denham}
\address{\linebreak
Graham Denham\\
Department of Mathematics, University of Western Ontario\\ 
London, Ontario, Canada N6A 5B7
}
\email{\href{gdenham@uwo.ca}{gdenham@uwo.ca}}
\author[D.~Pol]{Delphine Pol}
\address{ \linebreak
Delphine Pol\\
Department of Mathematics, TU Kaiserslautern\\
67663 Kaiserslautern\\
Germany
}
\email{\href{pol@mathematik.uni-kl.de}{pol@mathematik.uni-kl.de}}
\author[M.~Schulze]{Mathias Schulze}
\address{ \linebreak
Mathias Schulze\\
Department of Mathematics, TU Kaiserslautern\\
67663 Kaiserslautern\\
Germany
}
\email{\href{mschulze@mathematik.uni-kl.de}{mschulze@mathematik.uni-kl.de}}
\author[U.~Walther]{Uli Walther}
\address{\linebreak 
Uli Walther\\
Department of Mathematics, Purdue University\\
West Lafayette, IN 47907, USA
}
\email{\href{walther@math.purdue.edu}{walther@math.purdue.edu}}
\subjclass[2010]{Primary 14N20; Secondary 05C31, 14M12, 81Q30}
\keywords{Configuration, matroid, contact equivalence, Feynman, Kirchhoff, Symanzik}
\thanks{GD supported by NSERC of Canada. 
DP supported by a Humboldt Research Fellowship for Postdoctoral Researchers.
UW supported in part by Simons Foundation Collaboration Grant for Mathematicians \#580839.}
\numberwithin{equation}{section} 
\begin{document}

\begin{abstract}
Configuration polynomials generalize the classical Kirchhoff polynomial defined by a graph.
Their study sheds light on certain polynomials appearing in Feynman integrands.
Contact equivalence provides a way to study the associated configuration hypersurface. 
In the contact equivalence class of any configuration polynomial we
identify a polynomial with minimal number of variables; it is a configuration polynomial.
This minimal number is bounded by $r+1\choose 2$, where $r$ is the rank of the underlying matroid.
We show that the number of equivalence classes is finite exactly up to rank $3$ and list explicit normal forms for these classes.
\end{abstract}

\maketitle
\tableofcontents

\section{Introduction}

The Matrix-Tree Theorem is a classical result in algebraic graph theory. 
It was found by German physicist Gustav Kirchhoff in the mid-19th century in the study of electrical circuits.
It states that the number of spanning trees of a connected undirected graph $G$ with edge set $E$ agrees with any principal submaximal minor of its Laplacian.
Putting weights on the edges $e\in E$ of $G$ and considering them as variables $x_e$ yields the \emph{Kirchhoff polynomial} 
\[
\psi_G=\sum_{T\in\T_G}x^T,
\]
where $\T_G$ is the set of all spanning trees of $G$, and $x^T=\prod_{e\in T}x_e$.

Kirchhoff polynomials are a crucial ingredient of the theory of Feynman integrals (see, for example, \cite{Alu14,BBKP19,BS12,BSY14} and the literature trees in these works). 
In short, the Kirchhoff polynomial of a graph appears in the denominator of the Feynman integral attached to the particle scattering encoded by the dual graph via Feynman's rule. 
In certain cases, the integrand is just a power of the Kirchhoff polynomial, but in general there is also another component, a \emph{second Symanzik polynomial}.
In this way, singularities of Kirchhoff polynomials influence the behavior of the corresponding Feynman integral.

Considered as functions over $\KK=\CC$, Kirchhoff polynomials are never zero if all variables take values in a common open half-plane (defined by positivity of a non-trivial $\RR$-linear form).
Because Kirchhoff polynomials are homogeneous, this property is independent of the choice of half-plane.
For the right half-plane (with positive real part) it is referred to as the \emph{(Hurwitz) half-plane property}; the upper half-plane (with positive imaginary part) defines the class of \emph{stable polynomials}.
Generalizing beyond graphs, any matroid $\M$ with set of bases $\B_\M$ defines a \emph{matroid basis polynomial} $\psi_\M=\sum_{B\in\B_\M}x^B$.
In this way, $\psi_G=\psi_{\M_G}$ depends only on the \emph{graphic matroid} $\M_G$ on $E$ with set of bases $\T_G$.
Conditions for the half-plane property of $\psi_\M$ in terms of $\M$ were formulated by Choe, Oxley, Sokal and Wagner (see \cite[92]{COSW04}).
They consider general polynomials $\psi_{\M,\bolda}=\sum_{B\in\B_\M}a_Bx^B$ with matroid support and arbitrary coefficients $\bolda=(a_B)_{B\in\B_\M}$.
The question whether the half-plane property of $\psi_{\M,\bolda}$ for some coefficients $\bolda$ descends to $\psi_\M$ is studied for example by Br\"{a}nd\'{e}n and Gonz\'{a}lez D'Le\'{o}n (see \cite[Thm.~2.3]{BG10}), while Amini and Br\"{a}nd\'{e}n (see \cite{AB18}) consider interactions of the half-plane property, representability and the Lax conjecture.

Recently, Br\"and\'en and Huh introduced the class of \emph{Lorentzian polynomials} (see \cite{BH20}), which are defined by induction over the degree using partial derivatives, starting from quadratic forms satisfying a signature condition.
Stable polynomials are Lorentzian.  These polynomials have interesting negative dependence properties and close relations with matroids.
For example, if a multiaffine polynomial (that is, a polynomial supported on squarefree monomials) is Lorentzian, then it has the form $\psi_{\M,\bolda}$ for some matroid $\M$ and positive coefficients $\bolda$.  

By the Matrix-Tree Theorem, however, the coefficients $1$ of the Kirchhoff polynomial arise in a particular way:
Pick any orientation on $G$ and let $A$ be an incidence matrix with one row deleted.
Then $\psi_G=\det(AX A^\intercal)$, where $X$ the diagonal matrix of variables $x_e$ for all $e\in E$.
In more intrinsic terms, this is the determinant of the generic diagonal bilinear form, restricted to the span $W_G\subseteq\ZZ^E$ of all incidence vectors. 
Bloch, Esnault and Kreimer took this point of view for any linear subspace $W\subseteq\KK^E$ over a field $\KK$ (see \cite{BEK06,Pat10}).
With respect to the basis of $\KK^E$ this is a linear realization of a matroid $\M$, or a \emph{configuration}.
The dimension $\dim W$ equals the rank of $\M$, which we refer to as the \emph{rank} of the configuration $W$ (see Definition~\ref{3}).
The generic diagonal bilinear form on $\KK^E$ restricts to a \emph{configuration form} $Q_W$ on $W$.
Its determinant $\psi_W=\det(Q_W)$ is the \emph{configuration polynomial} associated with $W$, a homogeneous polynomial of degree $\dim W$ in variables $x_e$ for all $e\in E$ (see Definitions~\ref{4} and \ref{15}).
Configuration polynomials over $\KK=\CC$ are stable, by a result of Borcea and Br\"and\'en (see \cite[Prop.\ 2.4]{BB08}).
Notably the above mentioned second Symanzik polynomial is a configuration polynomial, but not a Kirchhoff polynomial.

\smallskip 

The configuration point of view has recently led to new insights on the affine and projective hypersurfaces defined by Kirchhoff polynomials (see \cite{DSW21,DPSW20}).
At present, the understanding of all the details of the singularity structure, as well as a satisfactory general treatment of Feynman integrals, is highly incomplete. 
There is some evidence that this is due to built-in complications coming from complexity issues (see \cite{BB03}). 
A natural problem is then to determine to what extent the formula for a configuration hypersurface is the most efficient way to encode the geometry: given a configuration polynomial, can it be rewritten
in fewer variables, and can this even be done as via another configuration?


In this article, we elaborate on this idea by studying configurations through the lens of \emph{(linear) contact equivalence} of their corresponding polynomials.
This is the equivalence relation on polynomials induced by permitting coordinate changes on the source and target of the polynomial (see Definition~\ref{8}).
Polynomials in the same equivalence class define the same affine hypersurfaces, up to a product with an affine space. 
While this approach is very natural from a geometric point of view, forgetting
the matroid structure under the equivalence makes it difficult to navigate, and provides certain surprises discussed below. 

The main vehicle of our investigations is that any matrix representation of $Q_W$ consists of \emph{Hadamard products} $v\star w$ of vectors $v,w\in W$, defined with respect to a basis of $\KK^E$ (see Notation~\ref{18}). 
After some preliminary discussion in earlier sections, we focus in \S\ref{67} on the problem of finding \enquote{small} representatives within the contact equivalence class of a given configuration. 
This requires us to look in detail at the structure of the higher \emph{Hadamard powers} $W^{\star s}$ of $W$ (see \S\ref{64}). 
While such Hadamard powers do usually not form chains with increasing $s$, they nonetheless have some monotonicity properties with regard to suitable restrictions to subsets of $E$ (see Lemma~\ref{1}). 
We use this to minimize the number of variables of configuration polynomials under contact equivalence (see Proposition~\ref{2}).
As a result we obtain


\begin{thm}\label{5}
Let $W\subseteq\KK^E$ be a configuration over a field $\KK$ of characteristic $\ch\KK=0$, or $\ch\KK>\dim W$.
Let $\nu(W)$ be the minimal number of variables appearing in any polynomial contact equivalent to
$\psi_W$. 
Then 
\[
\nu_W\le{\dim W+1\choose 2}.
\]
This minimum is realized within the set of configuration polynomials: there is a configuration $W'\subseteq \KK^{\nu(W)}$, constructed from $W$ by a suitable matroid restriction, with $\psi_W$ and $\psi_{W'}$ contact equivalent.
\end{thm}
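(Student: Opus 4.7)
The plan is to exploit the structural fact that every entry of the configuration form $Q_W$ is a linear form whose coefficient vector lies in the Hadamard square $W^{\star 2}$, and then to realize the resulting reduction by a matroid restriction.

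\medskip

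\noindent\textbf{Step 1 (entries live in $W^{\star 2}$).} Fix a basis $w_1,\dots,w_r$ of $W$. The $(i,j)$-entry of $Q_W$ in this basis is
\[
 \sum_{e\in E}(w_i)_e(w_j)_e\, x_e = \langle w_i\star w_j,\,x\rangle,
\]
whose coefficient vector is the Hadamard product $w_i\star w_j\in W^{\star 2}$. Thus every entry of $Q_W$, and hence $\psi_W=\det Q_W$ itself, factors through the $\KK$-linear surjection $\KK^E\twoheadrightarrow (W^{\star 2})^{*}$ dual to the inclusion $W^{\star 2}\subseteq\KK^E$. A linear change of variables on $\KK^E$ therefore exhibits $\psi_W$ as a polynomial in at most $\dim W^{\star 2}$ variables, so $\nu(W)\le\dim W^{\star 2}$.

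\medskip

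\noindent\textbf{Step 2 (dimension bound).} The Hadamard pairing is symmetric, so the map $W\otimes W\to W^{\star 2}$ factors through $\Sym^2(W)$. Since $\dim\Sym^2(W)=\binom{r+1}{2}$, we obtain
\[
 \nu(W)\le\dim W^{\star 2}\le\binom{r+1}{2},
\]
giving the inequality claimed in the theorem.

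\medskip

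\noindent\textbf{Step 3 (realization by a configuration).} To promote this bound to the realization statement I would invoke Lemma~\ref{1} to extract a subset $E'\subseteq E$ with $\abs{E'}=\nu(W)$ such that the restriction $\KK^E\twoheadrightarrow\KK^{E'}$ is injective on $W^{\star 2}$. Set $W':=W|_{E'}$. Injectivity on $W^{\star 2}$ forces injectivity on $W$ itself (otherwise some $w\in W$ with $w|_{E'}=0$ produces $w\star w_j|_{E'}=0$, contradicting injectivity on $W^{\star 2}$ whenever the matroid has no loops, which can be arranged), so $W'$ is a configuration of rank $r$. The entries of $Q_{W'}$ are the restrictions of the entries of $Q_W$, and the change of variables in Step 1 identifies them. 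Proposition~\ref{2} then packages this identification into the statement that $\psi_{W'}$ is contact equivalent to $\psi_W$, with $W'\subseteq\KK^{\nu(W)}$ obtained by matroid restriction.

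\medskip

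\noindent\textbf{Main obstacle.} Steps 1 and 2 are essentially formal. The real work lies in Step 3: one must choose $E'$ so that restriction simultaneously preserves $\dim W$ and $\dim W^{\star 2}$, and so that the resulting $W'$ is genuinely a rank-$r$ configuration whose polynomial matches $\psi_W$ under an explicit contact equivalence, not merely one with the same essential variable count. The monotonicity of Hadamard powers under restriction (Lemma~\ref{1}) appears to be the combinatorial engine: one greedily discards elements $e\in E$ whose deletion leaves $\dim W^{\star 2}$ unchanged until $\abs{E'}=\dim W^{\star 2}$. The characteristic hypothesis $\ch\KK=0$ or $\ch\KK>\dim W$ enters here to guarantee that the configuration polynomial of the restricted configuration does not degenerate, and that the factorization $\Sym^2(W)\twoheadrightarrow W^{\star 2}$ tracks dimensions as expected in the inductive step of Lemma~\ref{1}.
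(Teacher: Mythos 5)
Your Steps 1 and 2 are correct and give the inequality $\nu(W)\le\dim W^{\star 2}\le\binom{r+1}{2}$; together with your observation that one can restrict to a subset $E'$ on which $\pi_{E'}$ is injective on $W^{\star 2}$, this reproduces the content of Lemma~\ref{40} and of the estimate \eqref{7}. The genuine gap is in Step 3, in the claim that $\abs{E'}=\nu(W)$. Injectivity of $\pi_{E'}$ on $W^{\star 2}$ forces $\abs{E'}=\dim W^{\star 2}=r_W^2$, so your construction only produces a configuration polynomial in $r_W^2$ variables equivalent to $\psi_W$. It does not show that the \emph{minimum} $\nu(W)$ is realized by a configuration polynomial: a priori $\nu(W)$ could be strictly smaller than $r_W^2$ (the paper explicitly leaves open, in Remark~\ref{42}, whether the equivalence class of $\psi_W$ even determines $r_W^2$), and nothing in your argument rules out that some non-configuration polynomial in fewer variables is equivalent to $\psi_W$ while no further matroid restriction is. Appealing to Proposition~\ref{2} to close this gap is circular, since Proposition~\ref{2} \emph{is} the statement being proved.

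The missing ingredient is the converse direction supplied by Lemma~\ref{41}: if $\psi_W\simeq\phi$ with $\phi$ depending on fewer than $\abs E$ variables, then already some coordinate restriction $\psi_{W_{E\setminus\{e\}}}$ is equivalent to $\psi_W$. This is proved by transporting the vector fields $\partial/\partial y_j$ annihilating $\phi$ through the linear equivalence to get derivations $\delta_i$ with $\delta_i(\psi_W)=0$, normalizing them to coordinate vector fields, and concluding that $\psi_W$ is independent of the corresponding coordinates. It is exactly here --- in the implication \enquote{vanishing partial derivative $\Rightarrow$ independence of the variable} for a polynomial of degree $r_W$ --- that the hypothesis $\ch\KK=0$ or $\ch\KK>r_W$ is used. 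Your closing paragraph locates the characteristic assumption in Lemma~\ref{1} and in the surjection $\Sym^2(W)\onto W^{\star 2}$, but neither of these needs any hypothesis on $\ch\KK$; this misdiagnosis is a symptom of the missing lemma. Once Lemma~\ref{41} is available, one takes a subset $F$ minimal with $\psi_{W_F}\simeq\psi_W$ among subsets of the set $G$ from your Step 3, and minimality plus Lemma~\ref{41} shows that no polynomial equivalent to $\psi_{W_F}$ depends on fewer than $\abs F$ variables, i.e.\ $\abs F=\nu(W)$.
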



In \S\ref{68}, \S\ref{69} and \S\ref{71}, we then consider the classification problem of determining all contact equivalence classes for configurations $W$ of a given rank, and we prove the following


\begin{thm}\label{6}
For configurations of rank up to $3$, there are only finitely many contact equivalence classes. 
For each rank at least $4$, there is an infinite family of pairwise inequivalent configurations over $\KK=\QQ$.
\end{thm}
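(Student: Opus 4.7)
\emph{Overall plan.} I would prove the two halves of Theorem~\ref{6} independently, leaning on Theorem~\ref{5} in both cases to bound the effective number of variables of a configuration polynomial by $\binom{r+1}{2}$.

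\emph{Finiteness for $r\le 3$.} Theorem~\ref{5} allows me to replace any rank-$r$ configuration by a contact equivalent configuration polynomial on at most $\binom{r+1}{2}$ elements, and indeed still of the form $\psi_{W'}$. For $r\le 3$ this cap is $1$, $3$, or $6$, and there are only finitely many matroids on so few elements. I would stratify configurations by their underlying matroid and, inside each stratum, classify the resulting polynomials up to contact equivalence directly, producing the explicit list of normal forms advertised in the abstract. Rank $1$ is immediate (a single linear form). For rank $2$, $\psi_W=\det\bigl(\sum_i x_iv_iv_i^\intercal\bigr)$ with $v_i\in\KK^2$, so I would reduce nets of rank-$1$ symmetric $2\times 2$ matrices to normal form under $\GL_2$-conjugation (a change of basis of $W$) together with linear reparametrization of the $x_i$. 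For rank $3$ the strategy is the same, but now $v_iv_i^\intercal$ ranges over the Veronese surface in $\PP\Sym^2\KK^3=\PP^5$; I would stratify by $\dim\langle v_iv_i^\intercal\rangle$ and by the combinatorial position of the $v_i$, and reduce each stratum to an explicit normal form.

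\emph{Infinite family for $r\ge 4$.} I would exhibit an explicit one-parameter family $\{W_t:t\in\QQ\}$ of rank-$4$ configurations together with a contact invariant that takes infinitely many values on the $\psi_{W_t}$. A natural candidate family is obtained by fixing all but one entry of a $4\times n$ matrix realizing the uniform matroid $U_{4,n}$ and letting the remaining entry vary over $\QQ$. A natural candidate invariant is the projective equivalence class of $\Sing V(\psi_{W_t})\subset\PP^{n-1}$, or, more intrinsically, the rank stratification of the symmetric matrix $Q_{W_t}$; both are manifestly preserved by contact equivalence, which acts only by a scalar and a linear change of coordinates on $\PP^{n-1}$. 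To pass from rank $4$ to arbitrary rank $r\ge 4$, I would take a direct sum with a fixed rank-$(r-4)$ configuration $W_0$, so $\psi_{W_t\oplus W_0}=\psi_{W_t}\cdot\psi_{W_0}$; since irreducible factorization is preserved under contact equivalence (up to units and permutation), the invariant distinguishing the $\psi_{W_t}$ descends to the products provided $\psi_{W_0}\not\sim\psi_{W_t}$ for all $t$, which I can arrange by choice of $W_0$.

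\emph{Main obstacle.} The rank-$3$ classification is the most involved step: the space of nets of rank-$1$ symmetric $3\times 3$ matrices decomposes into several strata indexed by the matroids on up to $6$ elements of rank~$3$, and inside each stratum one must rule out continuous moduli by a dedicated contact-equivalence argument. A secondary difficulty is arranging the candidate invariant for the rank-$4$ family so that it is (i) genuinely contact invariant and (ii) provably non-constant on $\QQ$; I expect this to come down to a cross-ratio-type computation on four distinguished points in $\PP^{n-1}$ extracted canonically from the singular scheme of $V(\psi_{W_t})$.
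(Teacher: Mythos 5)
Your plan follows essentially the same route as the paper: reduce to at most $\binom{r+1}{2}$ variables via Theorem~\ref{5} (really Lemma~\ref{40}, which needs no hypothesis on $\ch\KK$), classify rank $\le 3$ by explicit normal forms stratified by $\dim\langle v_iv_i^\intercal\rangle$ --- which is exactly the paper's invariant $r_W^2=\dim_\KK W^{\star 2}$ of Propositions~\ref{11}, \ref{16} and \ref{17} --- and for rank $4$ exhibit a one-parameter family separated by a contact invariant extracted from the singular locus (the paper uses the primary decomposition of the submaximal-minor ideal $I_2(Q_m)$ in Proposition~\ref{53}), then lift to higher rank by a direct sum with a free configuration and unique factorization (Remark~\ref{12}, Corollary~\ref{14}). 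Two cautions: stratifying by the underlying matroid is not the organizing principle the paper uses and is somewhat at odds with its observation that the matroid neither determines nor is determined by the contact class, so you should lead with the $r_W^2$-stratification you also mention; and the decisive content --- the case-by-case rank-$3$ normal-form computation and the verification that your rank-$4$ invariant actually takes infinitely many values over $\QQ$ --- is precisely what your proposal defers, so as written it is a correct roadmap rather than a proof.
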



More precisely, we identify for $\dim W\le 3$ all contact equivalence classes and write down a normal form for each class (see Table~\ref{87}). 
This list is made of all possible products of generic determinants in up to $6$ variables together with 
\[
\det
\begin{pmatrix}
y_1 & y_4 & y_5\\
y_4 & y_2 & 0\\
y_5   & 0 & y_3
\end{pmatrix}\quad\text{and}\quad
\det
\begin{pmatrix}
y_1 & y_4 & y_4+y_5 \\
y_4 & y_2 & y_5 \\
y_4+y_5 & y_5 & y_3
\end{pmatrix}.
\]
For $\dim W=4$, already for $\abs E=6$ variables, we exhibit an infinite family of contact equivalence classes of configurations (see Proposition~\ref{53}). 


Our computations show that the contact equivalence class of a configuration neither determines nor is determined by the underlying matroid. 
Thus, one is prompted to wonder what characteristics of the graph/matroid of a Kirchhoff/configuration polynomial determine its complexity. 
We hope that our investigations here will help to shed light on this problem.

\subsection*{Acknowledgments}

We gratefully acknowledge support by the Bernoulli Center at EPFL during a \enquote{Bernoulli Brainstorm} in February 2019, and by the Centro de Giorgi in Pisa during a \enquote{Research in Pairs} in February 2020. 
We also thank June Huh for helpful comments.

\section{Configuration forms and polynomials}

Let $\KK$ be a field.
We denote the dual of a $\KK$-vector space $W$ by
\[
W^\vee:=\Hom_{\KK}(W,\KK).
\]

Let $E$ be a finite set. 
Whenever convenient, we order $E$ and identify 
\[
E=\set{e_1,\dots,e_n}=\set{1,\dots,n}.
\]
We identify $E$ with the canonical basis of the based $\KK$-vector space
\[
\KK^E:=\bigoplus_{e\in E}\KK\cdot e.
\]
We denote by $E^\vee=(e^\vee)_{e\in E}$ the dual basis of 
\[
(\KK^E)^\vee=\KK^{E^\vee}.
\]
We write $x_e:=e^\vee$ to emphasize that $x:=(x_e)_{e\in E}$ is a coordinate system on $\KK^E$.
For $F\subseteq E$ we denote by
\[
x^F:=\prod_{f\in F}x_f
\]
the corresponding monomial.
For $w\in\KK^E$ and $e\in E$, denote by $w_e:=e^\vee(w)$ the $e$-component of $w$.


\begin{dfn}\label{3}
Let $E$ be a finite set. 
A \emph{configuration} over $\KK$ is a $\KK$-vector space $W\subseteq \KK^E$. 
It gives rise to an \emph{associated matroid} $\M=\M_W$ with rank function $S\mapsto\dim_\KK\ideal{S^\vee|_W}$ and set of bases $\B_\M$.
We refer to its rank
\[
r_W:=\dim_\KK W
\]
as the \emph{rank} of the configuration.
\emph{Equivalent} configurations obtained by rescaling $E$ or by applying a field automorphism have the same associated matroid.
\end{dfn}


\begin{ntn}\label{18}
We denote the \emph{Hadamard product} of $u,v\in\KK^E$ by
\[
u\star v:=\sum_{e\in E} u_e\cdot v_e\cdot e\in\KK^E.
\]
We suppress the dependency on $E$ in this notation.
We abbreviate
\[
u^{\star s}:=\underbrace{u\star\cdots\star u}_{s}.
\]
\end{ntn}


\begin{dfn}[{\cite[Rem.~3.21, Def.~3.20]{DSW21},\cite[\S2.2]{Oxl11}}]\label{4}
Denote by $\mu_\KK$ the multiplication map of $\KK$.
Let $W\subseteq \KK^E$ be a configuration of rank $r=r_W$.
The associated \emph{configuration form} is 
\[
Q_W=\sum_{e\in E}x_e\cdot\mu_\KK\circ \left(e^\vee\times e^\vee\right)\colon W\times W\to\ideal{x}_\KK.
\]

A choice of (ordered) basis $w=(w^1,\dots,w^r)$ of $W\subseteq \KK^E$ together with an ordering of $E$ is equivalent to the choice of a \emph{configuration matrix} $A=(w^i_j)_{i,j}\in\KK^{r\times n}$ with row span $\ideal A$ equal to $W$.
With respect to these choices, $Q_W$ is represented by the $r\times r$ matrix
\[
Q_w:=Q_A:=(\ideal{x,w^i\star w^j})_{i,j}=\tup{\sum_{e\in E} x_e\cdot w_e^i\cdot w_e^j}_{i,j}.
\]
Different choices of bases $w,w'$ and orderings (or, equivalently, of configuration matrices) yield conjugate matrix representatives for $Q_W$.

Judicious choices of the basis and the orderings lead to a \emph{normalized} configuration matrix $A=\begin{pmatrix}I_r & A'\end{pmatrix}$, where $I_r$ is the $r\times r$ unit matrix.
\end{dfn}


\begin{rmk}\label{43}
For fixed $e\in E$, $(w_e^i\cdot w_e^j)_{i\le j}$ is the image of $(w_e^i)_i$ under the second Veronese map $\KK^r\to \KK^{r\choose 2}$.
Thus, $Q_w$ determines the vectors $(w_e^i)_i$ up to a common sign.
In particular, $Q_W$ determines the configuration $W$ up to equivalence.
\end{rmk}


\begin{dfn}[{\cite[Def.~3.2, Rem.~3.3, Lem.~3.23]{DSW21}}]\label{15}
Let $W\subseteq \KK^E$ be a configuration. 
If $A$ is a configuration matrix for $W$ with corresponding basis $w$, then the associated \emph{configuration polynomial} is defined by
\[
\psi_W:=\psi_w:=\psi_A:=\det(Q_A)\in\KK[x].
\]
It is determined by $W$ up to a square factor in $\KK^*$.
One has the alternative description
\[
\psi_A=\sum_{B\in\B_\M}\det(\KK^B\overset{w}{\to}W\onto\KK^B)^2\cdot x^B,
\]
using the ordering corresponding to $A$ on every basis $B\subseteq E$. 

The \emph{matroid (basis) polynomial} 
\[
\psi_\M=\sum_{B\in\B_\M} x^B\in\ZZ[x]
\]
of $\M=\M_W$ has the same monomial support as $\psi_W$ but the two can be significantly different (see \cite[Ex.~5.2]{DSW21}).
\end{dfn}


\begin{rmk}
If $G=(V,E)$ is a graph and $W\subseteq\KK^E$ is the row span of the incidence matrix of $G$, then $\psi_W=\psi_G$ is the \emph{Kirchhoff polynomial} of $G$ (see \cite[Prop.~3.16]{DSW21}).
\end{rmk}

\section{Hadamard products of configurations}\label{64}

Let $W\subseteq\KK^E$ be a configuration of rank
\[
r=r_W=\dim_\KK W\le\abs{E}.
\]
For $s\in\NN_{\ge1}$, denote by
\[
W^{\star s}:=\underbrace{W\star\cdots\star W}_{s}:=\ideal{w^1\star\cdots\star w^s\xmid w^1,\dots,w^s\in W}\subseteq\KK^E
\]
the $s$-fold Hadamard product of $W$ and by 
\[
r_W^s:=\dim_\KK W^{\star s}\le\abs{E}
\]
its dimension.
Note that $r_W=r_W^1$
By multilinearity and symmetry of the Hadamard product, we have a surjection
\[
\Sym^s_\KK W\onto W^{\star s},\quad w^{i_1}\cdots w^{i_s}\mapsto w^{i_1}\star\cdots\star w^{i_s}.
\]
In particular, for all $s,s'\in\NN_{\geq 1}$, there is an estimate
\begin{equation}\label{7}
r_W^s\le{r_W+s-1\choose s}.
\end{equation}
and equations
\[
(\KK^E)^{\star s}=\KK^E,\quad W^{\star s}\star W^{\star s'}=W^{\star (s+s')}.
\]


\begin{exa}
Consider the non-isomorphic rank $2$ configurations in $\KK^n$
\[
W=\ideal{(1,\dots,1),(1,2,3,\dots,n)},\quad W'=\ideal{(1,0,\ldots,0),(0,1,0,\ldots,0)}.
\]
Then $r_W^s=\min\set{s,n}$ as follows from properties of Vandermonde determinants, whereas $r_{W'}^s=2$.
\end{exa}


\begin{rmk}\label{12}
Extending a configuration $W\subseteq\KK^E$ by a direct summand $\KK$ with basis $f$ yields a new configuration $W'=W\oplus\KK^\set{f}\subseteq\KK^{E\sqcup\set{f}}$ with configuration matrix $A'=\begin{pmatrix} A& 0\\0 & 1\end{pmatrix}$, $r_{W'}^s=r_W^s+1$ and $\psi_{W'}=\psi_W\cdot x_f$.
\end{rmk}


For $F\subseteq E$, denote by 
\[
\pi_F\colon\KK^E\to\KK^F
\]
the corresponding $\KK$-linear projection map.
Abbreviate
\[
w_F:=\pi_F(w),\quad W_F:=\pi_F(W).
\]
By definition, $(w^1\star\cdots\star w^s)_F=w_F^1\star\cdots\star w_F^s$ and hence
\[
(W^{\star s})_F=(W_F)^{\star s}=:W^{\star s}_F.
\]


\begin{lem}\label{1}
For every configuration $W\subseteq\KK^E$ there is a filtration
\[
F_1\subseteq\cdots\subseteq F_t\subseteq\cdots\subseteq E
\]
on $E$ such that, for all $s'\le s$ in $\NN_{\geq 1}$, there is a commutative diagram
\begin{equation}\label{1a}
\begin{tikzcd}
\KK^E\arrow[r]\arrow[r,"\pi_{F_s}",twoheadrightarrow] & \KK^{F_s} \\[-5mm]
W^{\star s'}\ar[u,"\subseteq" description, sloped]\ar[r,"\cong"'] &  W^{\star s'}_{F_s}\ar[u,"\subseteq" description, sloped]
\end{tikzcd}
\end{equation}
in which the right hand containment is an equality for $s'=s$. 
In particular, for $s'\le s$,
\begin{equation}\label{1b}
r_W^{s'}\le r_W^s.
\end{equation}
\end{lem}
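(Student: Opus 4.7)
The plan is to construct the filtration inductively, taking $F_s$ to be a basis of the matroid $\M_{W^{\star s}}$ on $E$. By definition such a basis is a subset $F\subseteq E$ of size $r_W^s$ whose restrictions $\{e^\vee|_{W^{\star s}}\}_{e\in F}$ form a basis of $(W^{\star s})^\vee$; equivalently, $\pi_F$ restricts to an isomorphism $W^{\star s}\xrightarrow{\sim}\KK^F$. Choosing $F=F_s$ in this way is precisely what makes the right-hand containment in \eqref{1a} an equality when $s'=s$.

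Starting from any basis $F_1$ of $\M_W$ and assuming a chain $F_1\subseteq\cdots\subseteq F_{s-1}$ has been constructed with each $F_t$ a basis of $\M_{W^{\star t}}$, the crucial step is to show that $F_{s-1}$ remains independent in $\M_{W^{\star s}}$; standard matroid augmentation then supplies $F_s\supseteq F_{s-1}$ of size $r_W^s$, and yields along the way the estimate $r_W^{s-1}=|F_{s-1}|\le r_W^s$, which iterated gives \eqref{1b}. So suppose $\sum_{e\in F_{s-1}}c_e\cdot e^\vee$ vanishes on $W^{\star s}$. Evaluating on $u\star v$ with $u\in W^{\star(s-1)}$ and $v\in W$ (so $u\star v\in W^{\star s}$) gives $\sum_{e\in F_{s-1}}c_e u_e v_e=0$; reading this as a condition on $u$, the form $\sum_{e\in F_{s-1}}(c_e v_e)\cdot e^\vee$ vanishes on $W^{\star(s-1)}$, so the basis property of $F_{s-1}$ forces $c_e v_e=0$ for each $e$. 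Since $v\in W$ was arbitrary, either $c_e=0$ or $e$ is a loop of $W$; but loops of $W$ are loops of every $W^{\star t}$ with $t\ge 1$ and so cannot belong to $F_{s-1}$, leaving $c_e=0$ in all cases.

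It remains to verify injectivity of $\pi_{F_s}$ on $W^{\star s'}$ for $s'<s$ (the case $s'=s$ being by construction of $F_s$), which will complete the diagram \eqref{1a}. This follows from the same Hadamard trick: if $w\in W^{\star s'}$ vanishes on $F_s$, then for any $v\in W$ the element $w\star v^{\star(s-s')}\in W^{\star s}$ also vanishes on $F_s$ and hence equals zero by injectivity on $W^{\star s}$; choosing $v$ with $v_e\ne 0$ at a given non-loop $e$ forces $w_e=0$, and at loops $w_e$ is already zero. The main obstacle is the inductive extendability of the filtration, which hinges entirely on the Hadamard identity above, letting the basis property propagate from $W^{\star(s-1)}$ to $W^{\star s}$.
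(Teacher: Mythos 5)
Your proof is correct and follows essentially the same strategy as the paper's: inductively extend a matroid basis $F_{s-1}$ for $W^{\star(s-1)}$ to one for $W^{\star s}$, using Hadamard multiplication by powers $v^{\star k}$ of elements $v\in W$ (together with the observation that loops of $\M_W$ stay loops of every $\M_{W^{\star t}}$) to propagate independence and injectivity between Hadamard powers. The only cosmetic difference is that you check independence of $F_{s-1}$ in $\M_{W^{\star s}}$ dually, via linear forms vanishing on $W^{\star s}$, and verify injectivity of $\pi_{F_s}$ on $W^{\star s'}$ directly, whereas the paper exhibits elements of $W^{\star(t+1)}$ projecting onto each basis vector of $\KK^{F_s}$ and then uses the two-out-of-three argument for the composite surjection $W^{\star s'}\onto W^{\star s'}_{F_{t+1}}\onto W^{\star s'}_{F_t}$.
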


\begin{proof}
Note that \eqref{1b} is a direct consequence of \eqref{1a} and the filtration property.
We will construct the filtration inductively, starting with $F_1$. 
Let $F_1$ be any subset of $E$ such that $r_{W_{F_1}}=\abs{F_1}$ (in other words, a basis for the matroid $\M_W$ represented by $W$). 
Then \eqref{1a} is clear.

Suppose that $F_1\subseteq\cdots\subseteq F_t$ have been constructed, satisfying \eqref{1a} whenever $s'\le s\le t$. 
We claim first that $W^{\star(t+1)}_{F_s}=\KK^{F_s}$ for all $1\le s\le t$.
So take a basis element $e\in F_s$. 
From the inductive hypothesis $W_{F_s}^{\star s}=\KK^{F_s}$ we obtain a $v\in W^{\star s}$ such that $v_{F_s}=e$. 
By definition of $W^{\star s}$, there must be a $u\in W$ such that $u_e=1$ as otherwise $W_e=0$.  But then $w:=u^{\star (t+1-s)}\star v\in W^{\star(t+1)}$ satisfies $w_{F_s}=e$, so that $W^{\star(t+1)}_{F_s}=\KK^{F_s}$ as claimed.

The just established equation $W^{\star(t+1)}_{F_t}=\KK^{F_t}$ says that $F_t$ is an independent set for the matroid associated to the configuration $W^{\star(t+1)}\subseteq \KK^E$. 
Extend it to a basis $F_{t+1}$. 
Then \eqref{1a} follows for $s'=s=t+1$ (including the equality of the right inclusion). 
On the other hand, for $s'\le t$, the natural composite surjection
\[
\begin{tikzcd}
W^{\star s'}\ar[r,twoheadrightarrow] &  W^{\star s'}_{F_{t+1}}\ar[r,twoheadrightarrow] & W^{\star s'}_{F_t}
\end{tikzcd}
\]
is by the inductive hypothesis an isomorphism. 
Hence each of the two arrows in the display is an isomorphism as well, proving that \eqref{1a} holds for $s'<s=t+1$.  
\end{proof}


\begin{dfn}
Let $W\subseteq\KK^E$ be a configuration.
By Lemma~\ref{1} there is a minimal index $t_W$ such that $r_W^t=r_W^{t_W}$ for all $t\geq t_W$. 
We call $t_W$ the \emph{Hadamard exponent} and $r_W^{t_W}$ the \emph{Hadamard dimension} of $W$.
\end{dfn}

\section{Linear contact equivalence}

\begin{dfn}\label{8}
We call two polynomials $\phi\in\KK[x_1,\ldots,x_m]$ and $\psi\in\KK[x_1,\ldots,x_n]$ \emph{(linearly contact) equivalent} if for some $p\geq m,n$ there exists an $\ell\in\GL_p(\KK)$ and a $\lambda\in\KK^*$ such that 
\begin{equation}\label{9}
\phi=\lambda\cdot\psi\circ\ell
\end{equation}
in $\KK[x_1,\ldots,x_p]$.
We write $\phi\simeq\psi$ in this case. 
\end{dfn}


\begin{rmk}\label{61}\
\begin{enumerate}[(a)]

\item\label{61a} If $\KK$ is a perfect field and $\psi$ is homogeneous, then one can assume $\lambda=1$ in \eqref{9} at the cost of scaling $\ell$ by $\lambda^{1/\deg(\psi)}$. 

\item\label{61b} By definition, both adding redundant variables and permuting variables yield equivalent polynomials.
In particular enumerating $E$ and considering $E\subseteq\set{1,\dots,p}$ as a subset for any $p\ge\abs E$ gives sense to equivalence of configuration polynomials $\psi_W$.

\end{enumerate}
\end{rmk}


\begin{ntn}
For a fixed field $\KK$, we set
\[
\Psi:=\set{\psi_W \xmid E\text{ finite set},\ W\subseteq\KK^E}.
\]
\end{ntn}


We aim to understand linear contact equivalence on $\Psi$.

\section{Reduction of variables modulo equivalence}\label{67}

\begin{lem}\label{40}
Let $W\subseteq \KK^E$ be a configuration.
Then there is a subset $F\subseteq E$ of size $\abs{F}=r_{W_F}^2=r_W^2$ such that $\psi_W\simeq\psi_{W_F}$.
\end{lem}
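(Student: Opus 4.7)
The plan is to apply Lemma~\ref{1} at $s=2$ and exploit the fact that the configuration polynomial depends on its defining matrix only through the coefficient vectors of the entries, which all lie in $W^{\star 2}$. Lemma~\ref{1} produces a subset $F := F_2 \subseteq E$ of size $\abs{F} = r_W^2$ for which both restrictions $W \to W_F$ and $W^{\star 2} \to W^{\star 2}_F = \KK^F$ are isomorphisms. In particular $r_{W_F} = r_W$, hence $r_{W_F}^2 = r_W^2 = \abs{F}$, establishing the size claim.

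For the contact equivalence $\psi_W \simeq \psi_{W_F}$, I would fix a basis $w^1,\dots,w^r$ of $W$, so that $\pi_F(w^1),\dots,\pi_F(w^r)$ is a basis of $W_F$, and a basis $v_1,\dots,v_k$ of $W^{\star 2}$ with $k = r_W^2$, so that $\pi_F(v_1),\dots,\pi_F(v_k)$ is a basis of $W^{\star 2}_F = \KK^F$. Expanding $w^i \star w^j = \sum_m c_{ij,m} v_m$ and applying the $\KK$-linear isomorphism $\pi_F|_{W^{\star 2}}$, the same scalars $c_{ij,m}$ expand $\pi_F(w^i) \star \pi_F(w^j)$ in the basis $\pi_F(v_m)$. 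Introducing the linear forms $L_m(x) := \sum_{e \in E} x_e (v_m)_e$ and $L'_m(x') := \sum_{e \in F} x'_e (v_m)_e$, together with the universal polynomial $P(z_1,\dots,z_k) := \det\bigl(\sum_m c_{ij,m} z_m\bigr)_{i,j}$, Definition~\ref{15} yields $\psi_W(x) = P(L_1(x),\dots,L_k(x))$ and $\psi_{W_F}(x') = P(L'_1(x'),\dots,L'_k(x'))$, since the $(i,j)$-entry of each configuration matrix rewrites as $\sum_m c_{ij,m} L_m$ respectively $\sum_m c_{ij,m} L'_m$.

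The proof is then completed by realizing both sides as pullbacks of $P$ under invertible linear substitutions. Since $v_1,\dots,v_k$ are linearly independent in $\KK^E$, the forms $L_1,\dots,L_k$ are linearly independent in $\KK[x_e:e\in E]_1$, and I would extend them to a full basis. Since $\pi_F(v_1),\dots,\pi_F(v_k)$ is a basis of $\KK^F$ and $\abs{F}=k$, the forms $L'_1,\dots,L'_k$ already form a basis of $\KK[x'_e:e\in F]_1$. Viewing $\psi_{W_F}$ as a polynomial in $\abs{E}$ variables by adjoining $\abs{E}-\abs{F}$ redundant ones, composition of these two changes of basis produces an $\ell \in \GL_{\abs{E}}(\KK)$ with $\psi_W = \psi_{W_F} \circ \ell$, which is precisely the contact equivalence from Definition~\ref{8} with $\lambda = 1$.

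The main conceptual step is isolating $P$ and matching the expansion coefficients $c_{ij,m}$ for $W$ and $W_F$; once that is done, the contact equivalence becomes a bookkeeping exercise about invertible linear substitutions. The reason the argument works is that $\psi_W$ only sees the image of the coefficient map $\KK^E \to W^{\star 2}$, and Lemma~\ref{1} says this image is faithfully captured already after restriction to $F$.
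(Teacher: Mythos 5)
Your proof is correct and follows essentially the same route as the paper: both apply Lemma~\ref{1} at $s=2$ and then use the fact that the entries of $Q_W$ are linear forms with coefficient vectors in $W^{\star 2}$, so the isomorphism $\pi_F\vert_{W^{\star 2}}$ induces an invertible linear substitution relating $\psi_W$ and $\psi_{W_F}$ --- the paper packages this via the section $\iota_{F^\vee}$, where you make it explicit with the universal polynomial $P$ and bases. One cosmetic remark: the equality $r_{W_F}^2=r_W^2=\abs F$ follows from $W^{\star2}\cong W^{\star2}_F=\KK^F$, not from $r_{W_F}=r_W$ as your ``hence'' suggests, but you have already stated the needed isomorphism, so nothing is missing.
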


\begin{proof}
Lemma~\ref{1} with $t=2$ yields a subset $F\subseteq E$ such that
\begin{equation}\label{59}
\pi_F\vert_W\colon W\stackrel{\cong}{\longrightarrow} W_F\quad \text{ and
}\quad\pi_{F}\vert_{W^{\star 2}}\colon
W^{\star2}\stackrel{\cong}{\longrightarrow}W^{\star  2}_F=\KK^{F}.
\end{equation}
Let $\iota_F$ be the section of $\pi_F$ that factors through the inverse of $\pi_{F}\vert_{W^{\star 2}}$,
\begin{equation}\label{60}
\begin{tikzcd}[column sep=large]
\iota_F\colon\KK^F \ar[r,"(\pi_{F}\vert_{W^{\star 2}})^{-1}"] & W^{\star2}\ar[r,hookrightarrow] & \KK^E.
\end{tikzcd}
\end{equation}
Consider the $\KK$-linear isomorphism of based vector spaces
\[
q\colon\KK^E\to\KK^{E^\vee},\quad w\mapsto\sum_{e\in E}w_e\cdot x_e
\]
inducing the configuration $q(W)\subseteq\KK^{E^\vee}$. 
Set $F^\vee:=q(F)$ and $\iota_{F^\vee}:=q\circ\iota_F\circ q^{-1}$. 
Then $\pi_{F^\vee}=q\circ \pi_F\circ q^{-1}$, and \eqref{59} and \eqref{60} persist if $F$ is replaced by $F^\vee$ and $W$ by $q(W)$ throughout. 

Now choose a basis $w=(w^1,\dots,w^r)$ of $W$.
Then $w_F=(w^1_F,\dots,w^r_F)$ is a basis of $W_F$ by \eqref{59} and 
\begin{align*}
Q_W&=\tup{q(w^i\star w^j)}_{i,j}\\
&=\tup{q(w^i)\star q(w^j)}_{i,j}\\
&\overset{\eqref{60}}=\tup{\iota_{F^\vee}\circ \pi_{F^\vee}(q(w^i)\star q(w^j))}_{i,j}\\
&=\iota_{F^\vee}\tup{q(w^i)_{F^\vee}\star q(w^j)_{F^\vee}}_{i,j}\\
&=\iota_{F^\vee}\tup{q(w^i_F)\star q(w^j_F)}_{i,j}\\
&=\iota_{F^\vee}\tup{q(w_F^i\star w_F^j)}_{i,j}=\iota_{F^\vee}Q_{W_F}.
\end{align*}
Since $\iota_{F^\vee}$ is a section of $\pi_{F^\vee}$, $\psi_W\simeq\psi_{W_F}$ by taking determinants.
\end{proof}


\begin{lem}\label{41}
Let $W\subseteq\KK^E$ be a configuration.
Suppose that $\ch\KK=0$, or $\ch\KK>r_W$.
If $\psi_W\simeq\phi\in\KK[y_1,\dots,y_{n-1}]$ where $n:=\abs E$, then $\psi_W\simeq\psi_{W_{E\setminus\set{e}}}$ for some $e\in E$.
\end{lem}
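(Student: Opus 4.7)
The plan is to extract from the contact equivalence hypothesis a nonzero translation-invariance direction $v\in\KK^E$ for $\psi_W$, pick any $e\in E$ with $v_e\neq 0$, verify that this $e$ is not a coloop of $\M_W$, and then realize $\psi_W\simeq\psi_{W_{E\setminus\set{e}}}$ through an explicit invertible linear change of variables built from $v$.

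From $\phi=\lambda\cdot\psi_W\circ\ell$ with $\ell\in\GL_p(\KK)$ and $\phi$ independent of each of $y_n,\dots,y_p$, the fact that $\psi_W\circ\ell$ does not depend on $y_j$ translates directly into the polynomial identity $\psi_W(x+T\ell(e_j))=\psi_W(x)$. The $p-n+1$ linearly independent vectors $\ell(e_n),\dots,\ell(e_p)$ cannot all lie in $\ideal{e_{n+1},\dots,e_p}$, so at least one projects nontrivially to $\KK^n=\KK^E$ after identifying $E$ with the first $n$ coordinates; this produces a nonzero $v\in\KK^E$ with $\psi_W(x+Tv)=\psi_W(x)$. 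Now pick $e\in E$ with $v_e\neq 0$. Were $e$ a coloop, we would have $W=W'\oplus\KK e$ and $\psi_W=x_e\cdot\psi_{W'}$ with $\psi_{W'}\in\KK[x_f:f\neq e]$ nonzero; separating the $x_e^0$- and $x_e^1$-parts of the invariance identity would then force $T\cdot v_e\cdot\psi_{W'}=0$, hence $v_e=0$, a contradiction. Since $e$ is not a coloop, the projection $W\onto W_{E\setminus\set{e}}$ is an isomorphism, a configuration matrix for $W_{E\setminus\set{e}}$ arises from one for $W$ by deleting the $e$-column, and therefore $\psi_{W_{E\setminus\set{e}}}=\psi_W\vert_{x_e=0}$.

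Define the invertible $\ell'\in\GL(\KK^E)$ by $\ell'(x)_e:=x_e$ and $\ell'(x)_f:=x_f-(v_f/v_e)\,x_e$ for $f\neq e$. In the coordinates $y:=\ell'(x)$, the translation $x\mapsto x+Tv$ sends $y_e\mapsto y_e+Tv_e$ and fixes each $y_f$ for $f\neq e$, so translation invariance of $\psi_W$ in the direction $v$ forces $\psi_W$ to be independent of $y_e$ in $y$-coordinates; specializing $y_e=0$ then identifies $\psi_W(x)=\psi_{W_{E\setminus\set{e}}}(\ell'(x)_f:f\neq e)=\psi_{W_{E\setminus\set{e}}}(\ell'(x))$, which is the desired contact equivalence. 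The main subtlety is the coloop check, indispensable for degree matching since $\psi_W$ and $\psi_{W_{E\setminus\set{e}}}$ must have the same degree $r_W$. The characteristic hypothesis $\ch\KK=0$ or $\ch\KK>r_W$ enters naturally through the Taylor identity $\psi_W(x+Tv)=\sum_{k\leq r_W}(T^k/k!)\,D_v^{(k)}\psi_W(x)$, which legitimizes the equivalent reformulation of the translation invariance used above as the first-order vanishing $D_v\psi_W=0$ familiar from the proof of Lemma~\ref{40}.
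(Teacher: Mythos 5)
Your proof is correct, and while it shares the paper's overall strategy (extract from the equivalence a direction in which $\psi_W$ is constant, check it meets $\KK^E$, straighten it by a linear change of coordinates, and land on $\psi_W\vert_{x_e=0}=\psi_{W_{E\setminus\set{e}}}$), the key step is implemented differently. The paper works with the derivations $\delta_i=\ell_*(\partial/\partial y_{n-1+i})$, deduces $\delta_i(\psi_W)=0$, and then needs the hypothesis $\ch\KK=0$ or $\ch\KK>r_W=\deg\psi_W$ to pass from vanishing partial derivatives to genuine independence of a variable (this implication fails in general in positive characteristic, e.g.\ for $x^p$). You instead use the full translation-invariance identity $\psi_W(x+T\ell(e_j))=\psi_W(x)$, which is literally equivalent to independence of $y_j$ and therefore yields independence of $y_e$ after your change of coordinates $\ell'$ with no constraint on the characteristic; your dimension count showing some $\ell(e_j)$ projects nontrivially to $\KK^E$ plays the role of the paper's choice of an invertible submatrix of $(\delta_i(x_j))$. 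Your explicit coloop check is also a welcome addition: it is what guarantees $\psi_W\vert_{x_e=0}=\psi_{W_{E\setminus\set{e}}}$ rather than $0$, a point the paper leaves implicit. The one thing to fix is your closing remark: as written, your argument never reduces to the first-order condition $D_v\psi_W=0$, so the characteristic hypothesis does not actually enter your proof at all --- you have in fact proved the lemma without it. Either delete that sentence or, if you prefer to phrase the argument via $D_v\psi_W=0$ as in the paper, move the Taylor-expansion justification into the body of the proof where it would genuinely be used.
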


\begin{proof}
Let $\ell\in\GL_p(\KK)$ and $\lambda\in\KK^*$ realize the equivalence $\phi\simeq\psi_W$, that is, $\phi=\lambda\cdot\psi_W\circ\ell$ where $E\subseteq\set{1,\dots,p}$ (see Remark~\ref{61}.\eqref{61b}). 
Consider the $\KK$-linearly independent $\KK$-linear derivations of $\KK[x_1,\dots,x_p]$
\[
\delta_i:=\ell_*(\frac{\partial}{\partial y_{n-1+i}})=\frac{\partial}{\partial y_{n-1+i}}(-\circ\ell)\circ\ell^{-1},\quad i=1,\dots,p-n+1.
\]
Since $\phi$ is independent of $y_n,\dots,y_p$, we have
\begin{equation}\label{62}
\delta_i(\psi_W)=\lambda^{-1}\cdot\frac{\partial\phi}{\partial y_{n-1+i}}\circ\ell^{-1}=0,\quad i=1,\dots,p-n+1.
\end{equation}
By suitably reordering $\set{1,\dots,p}$ we may assume that the matrix $(\delta_i(x_{j}))_{i,j\in\set{1,\dots, p-n+1}}$ is invertible.
After replacing the $\delta_i$ by suitable linear combinations, we may further assume that $\delta_i(x_j)=\delta_{i,j}$ for all $i,j\in\set{1,\dots, p-n+1}$.
Then
\begin{align*}
x_i&=x'_i,\quad i=1,\dots,p-n+1,\\
x_i&=x'_i+\sum_{j=1}^{p-n+1}\delta_j(x_i)\cdot x'_j,\quad i=p-n+2,\dots,p,
\end{align*}
defines a coordinate change such that
\begin{equation}\label{63}
\delta_j=\sum_{i=1}^p\delta_j(x_i)\frac{\partial}{\partial x_i}
=\sum_{i=1}^p\frac{\partial x_i}{\partial x'_j}\frac{\partial}{\partial x_i}=\frac{\partial}{\partial x'_j},\quad j=1,\dots,p-n+1.
\end{equation}
If $\ch\KK>0$, then $\ch\KK>r_W=\deg(\psi_W)$ by hypothesis.
By \eqref{62} and \eqref{63}, $\psi_W$ is thus independent of $x'_1,\dots,x'_{p-n+1}$.
Setting $x_i=x'_i=0$ for $i=1,\dots,p-n+1$ thus leaves $\psi_W$ unchanged and makes $x_i=x'_i$ for $i=p-n+2,\dots,p$.
It follows that
\[
\psi_W\simeq\psi_W\vert_{x'_1=\cdots=x'_{p-n+1}=0}
=\psi_W\vert_{x_1=\cdots=x_{p-n+1}=0}
=\psi_{W_{E\setminus\set{1,\dots,p-n+1}}}.
\]
Then any $e\in E\cap\set{1,\dots,p-n+1}$ satisfies the claim.
\end{proof}


\begin{prp}\label{2}
Let $W\subseteq \KK^E$ be a configuration.
Then there is a subset $F\subseteq E$ of size $\abs{F}=r_{W_F}^2\le r_W^2$ such that $\psi_W\simeq\psi_{W_F}$.
Suppose that $\ch\KK=0$, or $\ch\KK>r_W$.
Then any polynomial $\phi\simeq\psi_{W_F}$ depends on at least $\abs{F}$ variables.
In other words, among the polynomials equivalent to $\psi_W$ with minimal number of variables is the configuration polynomial $\psi_{W_F}$.
\end{prp}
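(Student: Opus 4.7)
The first assertion is immediate from Lemma~\ref{40}: it supplies a subset $F \subseteq E$ with $\abs F = r_{W_F}^2 \le r_W^2$ and $\psi_W \simeq \psi_{W_F}$, and moreover the construction there forces $W_F^{\star 2} = \KK^F$.

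For the minimality assertion the plan is to argue by contradiction, iterating Lemma~\ref{41}. Suppose some $\phi \simeq \psi_{W_F}$ uses $m < \abs F$ variables. Under the characteristic hypothesis, Lemma~\ref{41} applied to $W_F$ produces $e_1 \in F$ with $\psi_{W_F} \simeq \psi_{W_{F \setminus \{e_1\}}}$. The key point that keeps the iteration inside a tractable class of configurations is that $W_F^{\star 2} = \KK^F$ is preserved under restriction: since Hadamard products commute with coordinate projections,
\[
(W_{F \setminus \{e_1\}})^{\star 2} = \pi_{F \setminus \{e_1\}}(W_F^{\star 2}) = \pi_{F \setminus \{e_1\}}(\KK^F) = \KK^{F \setminus \{e_1\}}.
\]
Iterating yields a chain $F \supsetneq F_1 \supsetneq \cdots \supsetneq F^*$ with $\abs{F^*} = m$, $\psi_W \simeq \psi_{W_{F^*}}$, and $W_{F^*}^{\star 2} = \KK^{F^*}$.

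The main obstacle, and the real content of the proposition, is to derive a contradiction from this descent. The clean way to do so is to establish the following key claim: \emph{if a configuration $W' \subseteq \KK^{E'}$ satisfies $(W')^{\star 2} = \KK^{E'}$, then the partial derivatives $\{\partial \psi_{W'}/\partial x_e\}_{e \in E'}$ are $\KK$-linearly independent}, equivalently $\psi_{W'}$ has no nontrivial translational symmetries. Applied to $W' = W_F$, this forces the minimal number of variables among polynomials equivalent to $\psi_{W_F}$ to be $\abs F$, contradicting the existence of $\phi$ with $m < \abs F$ variables.

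To prove the key claim, Jacobi's formula gives $\partial \psi_{W'}/\partial x_e = \mathrm{tr}(\mathrm{adj}(Q_{W'}) \cdot a_e a_e^\intercal)$, so a linear relation among the partials translates into $\mathrm{tr}(\mathrm{adj}(Q_{W'}(x)) \cdot M_\lambda) \equiv 0$ in $x$, for $M_\lambda = \sum_e \lambda_e a_e a_e^\intercal$. Because $(W')^{\star 2} = \KK^{E'}$ is exactly the linear independence of the $a_e a_e^\intercal$ in $\Sym^2 \KK^{r_{W'}}$, a nonzero $\lambda$ produces a nonzero $M_\lambda$, so the remaining task is to rule out a nontrivial $M$ in the trace identity. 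I plan to do this by specializing $x$ to indicator vectors $\mathbf{1}_B$ of bases $B$ of the underlying matroid: at such a point $\mathrm{adj}(Q_{W'}(\mathbf{1}_B))$ is a nonzero rank-one matrix of the form $\gamma_B u_B u_B^\intercal$ with $u_B$ spanning the annihilator of $\{a_e : e \in B\}$, yielding conditions $\sum_{e \notin B} \lambda_e (u_B \cdot a_e)^2 = 0$ on $\lambda$ that, together with the surjectivity of $M \mapsto (a_e^\intercal M a_e)_{e \in E'}$ provided by the Hadamard hypothesis, should force $\lambda = 0$.
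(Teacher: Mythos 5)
Your descent via Lemma~\ref{41}, and the observation that the condition $W_F^{\star 2}=\KK^F$ passes to subsets of $F$, are both correct and are exactly the ingredients the paper uses. The problem is what you then do with them. Proposition~\ref{2} is an existence statement: \emph{some} $F$ with $\abs F=r_{W_F}^2\le r_W^2$ and $\psi_W\simeq\psi_{W_F}$ has the property that nothing equivalent to $\psi_{W_F}$ lives in fewer variables. The intended argument takes $G$ from Lemma~\ref{40} and then chooses $F\subseteq G$ with $\psi_{W_F}\simeq\psi_{W_G}$ of \emph{minimal} cardinality; if some $\phi\simeq\psi_{W_F}$ used fewer than $\abs F$ variables, Lemma~\ref{41} would produce $F\setminus\set e$ contradicting minimality, and $F\subseteq G$ guarantees $\abs F=r_{W_F}^2\le r_W^2$. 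Your own chain $F\supsetneq F_1\supsetneq\cdots\supsetneq F^*$ already contains this argument: you only needed to replace $F$ by a minimal member of such chains and stop. Instead you insist on deriving a contradiction for the original $F$ supplied by Lemma~\ref{40}, i.e.\ on proving that the set of size exactly $r_W^2$ is \emph{already} minimal --- a strictly stronger statement than the one asserted.

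That stronger statement is your \enquote{key claim}, and it is not proved: the argument ends with conditions that \enquote{should force $\lambda=0$}. (As a side issue, at $x=\mathbf{1}_B$ for a basis $B$ the matrix $Q_{W'}(\mathbf{1}_B)$ is invertible, so its adjugate has full rank rather than rank one; the rank-one specializations you want come from independent sets of size $r-1$, not bases.) More seriously, the key claim is equivalent to the question the authors explicitly leave open in Remark~\ref{42}: if every configuration with $(W')^{\star 2}=\KK^{E'}$ had linearly independent partial derivatives, then the minimal number of variables in the equivalence class of $\psi_W$ would always equal $r_W^2$, so the equivalence class of $\psi_W$ would determine $r_W^2$ --- precisely what Remark~\ref{42} states is unknown. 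Conversely, a failure of the key claim fed into Lemma~\ref{41} yields equivalent configuration polynomials $\psi_{W'}\simeq\psi_{W'_{E'\setminus\set e}}$ with different values of the invariant $r^2$. So your route requires settling an open problem of the paper, whereas the statement actually being proved needs only the minimality trick above.
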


\begin{proof}
By Lemma~\ref{40} there is a subset $G\subseteq E$ such that $\abs{G}=r_{W_{G}}^2= r_W^2$ and $\psi_W\simeq\psi_{W_G}$.
Note that $\abs{G}=r_{W_G}^2$ means $W_G^{\star 2}=\KK^G$ which for any subset $F\subseteq G$ implies that $W_F^{\star 2}=\KK^F$ and hence $\abs{F}=r_{W_F}^2\le r_W^2$.
Pick such an $F$ with $\psi_{W_F}\simeq\psi_{W_G}$ minimizing $\abs F$.
Note that $r_{W_F}\le r_W$.
By Lemma~\ref{41} applied to the configuration $W_F\subseteq\KK^F$, any $\phi\simeq\psi_{W_F}$ depending on fewer than $\abs{F}$ variables yields an $e\in F$ such that $\psi_{W_F}\simeq\psi_{W_{F\setminus\set e}}$, contradicting the minimality of $F$.
\end{proof}


\begin{rmk}\label{42}
By Remark~\ref{43}, $Q_W$ determines $r_W^2$.
By definition, (the equivalence class of) $\psi_W$ determines $r_W^1=r_W=\deg\psi_W$.
We do not know whether it also determines  $r_W^2$.
\end{rmk}

\section{Extremal cases of equivalence classes}\label{68}

\begin{ntn}
For $r,d\in\NN$, set 
\[
\Psi^d_r=\set{\psi_W \mid E\text{ finite set},\ W\subseteq\KK^E,\ r_W=r,\ r_W^2=d}.
\]
\end{ntn}


\begin{lem}\label{10}
Let $W\subseteq \KK^E$ be a configuration of rank $r$ with basis $(w^1,\dots,w^r)$. 
Let $G$ be the graph on the vertices $v_1,\ldots,v_r$ in which $\{v_i,v_j\}$ is an edge if and only if $w^i\star w^j\ne0$. 
Let $G^*$ be the cone graph over $G$.

If $\set{w^i\star w^j\mid i\le j, w^i\star w^j\neq 0}$ is linearly
independent, then
\[
\psi_W\simeq\psi_{G^*}
\]
is the Kirchhoff polynomial of $G^*$.
\end{lem}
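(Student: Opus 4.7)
The plan is to compare the matrix $Q_w$ representing the configuration form directly with the reduced weighted Laplacian of $G^*$ (whose determinant, by the Matrix-Tree theorem, is $\psi_{G^*}$): both are $r\times r$ symmetric matrices of linear forms, and I will exhibit an invertible linear substitution of variables carrying one into the other. This yields the required contact equivalence with $\lambda=1$.

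First, set $S:=\set{(i,j)\xmid 1\le i\le j\le r,\ w^i\star w^j\neq0}$; since basis vectors are nonzero, every $(i,i)$ lies in $S$. Using the isomorphism $q\colon\KK^E\to\KK^{E^\vee}$ from the proof of Lemma~\ref{40}, the independence hypothesis translates into the linear independence in $\KK[x_e:e\in E]$ of the linear forms
\[
y_{ij}:=\ideal{x,w^i\star w^j}=\sum_{e\in E}w_e^iw_e^jx_e,\quad (i,j)\in S.
\]
Extend them to a full system of coordinates on $\KK^E$ by appending auxiliary linear forms $z_1,\ldots,z_{\abs E-\abs S}$. In these new coordinates, $Q_w$ becomes the symmetric $r\times r$ matrix $M$ with $M_{ij}=y_{ij}$ when $(i,j)\in S$ (with the symmetric convention $y_{ji}:=y_{ij}$) and $M_{ij}=0$ otherwise; in particular $\psi_W=\det(M)$.

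Second, label the edges of $G^*$ by $e_{ij}$ for $1\le i<j\le r$ with $\set{v_i,v_j}\in G$ (equivalently $(i,j)\in S$ with $i<j$) and by $e_{0i}$ for $i=1,\ldots,r$, and form the reduced Laplacian $L'$ by deleting the row and column indexed by the cone vertex $v_0$. Then
\[
L'_{ii}=x_{e_{0i}}+\sum_{\substack{j\neq i\\ w^i\star w^j\neq0}}x_{e_{ij}},\qquad L'_{ij}=-x_{e_{ij}}\text{ for }i\neq j,\ \set{v_i,v_j}\in G,
\]
and $L'_{ij}=0$ otherwise, so that $\psi_{G^*}=\det(L')$.

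Finally, define the substitution
\[
x_{e_{ij}}=-y_{ij}\ (i<j,\ (i,j)\in S),\qquad x_{e_{0i}}=y_{ii}+\sum_{\substack{j\neq i\\ w^i\star w^j\neq0}}y_{\min(i,j),\max(i,j)},
\]
extending by the identity on the $z_k$'s. Its triangular shape (first the $y_{ij}$ with $i<j$ determine the $x_{e_{ij}}$, then the $y_{ii}$ determine the $x_{e_{0i}}$) makes it an element of $\GL_{\abs E}(\KK)$. Under it each off-diagonal $L'_{ij}=-x_{e_{ij}}=y_{ij}$ matches $M_{ij}$, and on the diagonal the $x_{e_{ij}}$-contributions cancel against the corresponding summands in $x_{e_{0i}}$, leaving $L'_{ii}=y_{ii}=M_{ii}$. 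Taking determinants gives $\psi_W\simeq\psi_{G^*}$. The argument is essentially bookkeeping; the only step that requires any attention is confirming the invertibility of the substitution, but this is automatic from its triangular design, so I do not anticipate a genuine obstacle.
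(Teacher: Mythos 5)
Your argument is correct and complete. Note that the paper gives no argument of its own here --- the proof is simply the pointer \enquote{see \cite[Thm.~3.2]{BB03} and its proof} --- so what you have written supplies exactly the details that the citation outsources: the entries of $Q_w$ are the linear forms $\ideal{x,w^i\star w^j}$, the independence hypothesis lets you take the nonzero ones as part of a coordinate system on $\KK^E$ (so $\psi_W$ becomes the determinant of a generic symmetric matrix with a prescribed zero pattern), and your triangular substitution carries the reduced weighted Laplacian of $G^*$ onto that matrix, whence the weighted Matrix--Tree theorem finishes the proof. The bookkeeping all checks: $(i,i)\in S$ because $w^i\star w^i=0$ forces $w^i=0$; $\abs S\le\abs E$ because the $y_{ij}$ are independent forms; the edge set of $G^*$ has exactly $\abs S$ elements, matching the $y$-coordinates; and the diagonal cancellation $L'_{ii}=y_{ii}$ works as stated. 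The one point worth making explicit is that the hypothesis must be read as saying the \emph{family} $(w^i\star w^j)_{(i,j)\in S}$ is linearly independent --- in particular the nonzero products are pairwise distinct --- since otherwise two entries of your matrix $M$ would be the same coordinate and the identification with $L'$ would break down; this is clearly the intended reading and is what the cited result of Belkale--Brosnan assumes as well.
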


\begin{proof}
See \cite[Thm.~3.2]{BB03} and its proof.
\end{proof}


\begin{prp}\label{11}
If $d=r$, then every element of $\Psi^d_r$ is equivalent to $x_1\cdots x_r$.

If $d={r+1\choose 2}$, then every element of $\Psi^d_r$ is equivalent to the elementary symmetric polynomial of degree $r$ in the variables $x_1,\ldots,x_d$.
\end{prp}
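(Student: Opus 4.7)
The plan is to use Proposition~\ref{2} to reduce $|E|$ to exactly $d$, and then compute $\psi_W$ directly in each extremal case. By Proposition~\ref{2}, I replace $W$ with its restriction $W_F$ to a subset $F \subseteq E$ satisfying $|F| = r_{W_F}^2 = r_W^2 = d$. The construction in Lemma~\ref{40} produces $F$ so that $\pi_F|_W$ is an isomorphism, hence $r_{W_F} = r_W = r$ as well, and I may assume $|E| = d$ throughout.

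For $d = r$, the inclusion $W \subseteq \KK^E$ of equal dimensions forces $W = \KK^E$. Choosing the standard basis $e_1, \ldots, e_r$ of $\KK^E$ as a basis of $W$, one has $e_i \star e_j = \delta_{ij} e_i$, so $Q_W = \operatorname{diag}(x_1, \ldots, x_r)$ and $\psi_W = x_1 \cdots x_r$ on the nose; this is $e_r(x_1, \ldots, x_r)$.

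For $d = \binom{r+1}{2}$, I pick any basis $w^1, \ldots, w^r$ of $W$ and observe that the $\binom{r+1}{2} = d$ Hadamard products $\{w^i \star w^j : i \le j\}$ span $W^{\star 2} = \KK^E$, which has dimension $d$, so they form a basis of $\KK^E$ and are in particular all nonzero. The linear-independence hypothesis of Lemma~\ref{10} is thereby satisfied, and the auxiliary graph $G$ appearing there is the complete graph $K_r$ (every pair of vertices is joined), so its cone is $G^* = K_{r+1}$. Lemma~\ref{10} then yields $\psi_W \simeq \psi_{K_{r+1}}$. Equivalently, the entries of $Q_W$ are $d$ linearly independent linear forms $y_{ij} := \langle x, w^i \star w^j \rangle$ in the $x_e$, and adopting them as new coordinates presents $\psi_W$ as the determinant of the generic $r \times r$ symmetric matrix in $d$ indeterminates.

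The concluding step is to identify this determinantal normal form with the elementary symmetric polynomial $e_r(x_1, \ldots, x_d)$ of the statement. For $r = 2$ both descriptions are nondegenerate quadratic forms in three variables and the identification follows from diagonalization. For larger $r$ this is the step I expect to require the most care, amounting to verifying that the two explicit presentations of the rank-$r$, $d$-variable normal form coincide under a suitable linear change of coordinates; once that is established the proposition follows at once by composition with the equivalence produced by Lemma~\ref{10}.
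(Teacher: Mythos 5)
Your reduction to $\abs{E}=d$ via Proposition~\ref{2}/Lemma~\ref{40}, the observation that $d=r$ forces $W=\KK^E$ and $\psi_W=x_1\cdots x_r$, and the application of Lemma~\ref{10} with $G=K_r$, hence $G^*=K_{r+1}$, when $d=\binom{r+1}{2}$ reproduce the paper's proof exactly. The paper's argument stops where your third paragraph ends: $\psi_W\simeq\psi_{K_{r+1}}$, equivalently (as you note) the determinant of the generic symmetric $r\times r$ matrix in $d$ variables.

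The step you defer to the final paragraph --- identifying that determinant with $e_r(x_1,\dots,x_d)$ --- is not merely delicate; for $r\ge3$ it is false, so no change of coordinates will complete it. Already for $r=3$: the singular locus of the generic symmetric $3\times3$ determinant is its rank-one locus, the Veronese surface, of codimension $3$ in $\PP^5$, whereas the singular locus of $V(e_3(x_1,\dots,x_6))$ consists only of the six coordinate points (codimension $5$): the conditions $\partial e_3/\partial x_i=x_i^2-e_1x_i+e_2=0$ for all $i$ force every coordinate to be a root of one quadratic, and a short case check on how many coordinates take each root eliminates everything except the coordinate points. Since the codimension of the singular locus is a contact invariant, the two polynomials are inequivalent. (They are visibly different already as polynomials: $\psi_{K_4}$ has $16$ monomials, $e_3$ has $20$.) The coincidence $e_r(x_1,\dots,x_d)=\psi_{K_{r+1}}$ holds only for $r\le2$, which is why your $r=2$ verification (and Proposition~\ref{16}) goes through. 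So you correctly isolated the one step that cannot be filled; the statement should be read with \enquote{elementary symmetric polynomial} replaced by the Kirchhoff polynomial of $K_{r+1}$, i.e.\ the generic symmetric determinant --- the normal form the paper itself records in Table~\ref{87} for $r=3$, $\abs{E}=6$. With that reading your argument is complete and coincides with the paper's.
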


\begin{proof}
Let $W\subseteq\KK^E$ be a configuration.

First suppose that $r_W^2=r_W$.
By Lemma~\ref{40}, we may assume that $\abs{E}=r_W^2$.
Then $W=\KK^E$ and hence $\psi_W=x^E$ is the matroid polynomial of the free matroid on $r_W$ elements.

Now suppose that $r_W^2={r_W+1\choose 2}$.
Then $\set{w^i\star w^j\mid 1\le i\le j\le r}$ is linearly independent for any basis $(w^1,\ldots,w^r)$ of $W$.
By Lemma~\ref{10}, $\psi_W$ is then equivalent to the Kirchhoff polynomial of the complete graph on $r_W+1$ vertices. 
\end{proof}

\section{Finite number classes for small rank matroids}\label{69}

The purpose of this section is to give a complete classification of configuration polynomials for matroids of rank at most $3$ with respect to the equivalence relation of Definition~\ref{8}.
Due to Proposition~\ref{2}, we may assume that $\abs E=r_W^2$.

 
\begin{dfn}[{\cite[\S2.2]{Oxl11}}]\label{13}
A choice of basis $(w^1,\dots,w^r)$ of $W\subseteq \KK^E$ and order of $E$ gives rise to a \emph{configuration matrix} $A=(w^i_j)_{i,j}\in\KK^{r\times n}$, whose row span recovers $W=\ideal A$.
Up to reordering $E$ it can be assumed in \emph{normalized} form $A=(I_r|A')$ where $I_r$ is the $r\times r$ unit matrix.
\end{dfn}

 
\begin{prp}\label{16}
Let $W$ be a configuration of rank $2$.
If $r_W^2=2$, then $\psi_W\simeq x_1x_2$, otherwise, $r_W^2=3$ and $\psi_W\simeq x_1x_2-x_3^2$. 
\end{prp}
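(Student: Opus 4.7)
The plan is to reduce to the two extremal cases already treated in Proposition~\ref{11} and then exhibit an explicit linear change of variables identifying the normal form in the non-squarefree case.

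First I would invoke Proposition~\ref{2} to reduce to the situation $\abs E = r_W^2$. Lemma~\ref{1} applied with $s' = 1 \le s = 2$ gives $r_W \le r_W^2$, while the estimate \eqref{7} with $s=2$ gives $r_W^2 \le {r_W+1 \choose 2}$. With $r_W = 2$ these bounds force $r_W^2 \in \set{2,3}$, and these two values are precisely the extremes $d = r$ and $d = {r+1 \choose 2}$ treated in Proposition~\ref{11}.

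Next I would apply that Proposition in each case. When $r_W^2 = 2$, its first part gives $\psi_W \simeq x_1 x_2$ directly. When $r_W^2 = 3$, its second part gives $\psi_W \simeq x_1 x_2 + x_1 x_3 + x_2 x_3$, the degree-$2$ elementary symmetric polynomial in three variables.

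The final step is then a direct verification that $x_1 x_2 + x_1 x_3 + x_2 x_3$ is linearly contact equivalent to $x_1 x_2 - x_3^2$. The substitution $(x_1, x_2, x_3) \mapsto (x_1 - x_3, x_2 - x_3, x_3)$, which lies in $\GL_3(\KK)$, sends the elementary symmetric polynomial to $(x_1 - x_3)(x_2 - x_3) + (x_1 - x_3)x_3 + (x_2 - x_3)x_3 = x_1 x_2 - x_3^2$. I do not anticipate any genuine obstacle; the argument is essentially a bookkeeping step combining Propositions~\ref{2} and~\ref{11} with this elementary quadratic form identification.
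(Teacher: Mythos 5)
Your proof is correct and follows the same overall strategy as the paper: reduce to $\abs E=r_W^2\in\set{2,3}$ via Proposition~\ref{2} and the bounds \eqref{1b} and \eqref{7}, quote Proposition~\ref{11} for the two extremal cases, and then identify the elementary symmetric polynomial $x_1x_2+x_1x_3+x_2x_3$ with $x_1x_2-x_3^2$ by an explicit element of $\GL_3(\KK)$. The one place where you diverge is that final identification, and your version is actually cleaner: the single substitution $(x_1,x_2,x_3)\mapsto(x_1-x_3,x_2-x_3,x_3)$ works uniformly over every field, whereas the paper first applies $x_1\mapsto x_1+x_2$ and then splits into the cases $\ch\KK=2$ and $2\in\KK^*$ (completing the square in the latter), leaving the last identification to the reader in both branches. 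Your characteristic-free computation, which one checks directly gives $(x_1-x_3)(x_2-x_3)+(x_1-x_3)x_3+(x_2-x_3)x_3=x_1x_2-x_3^2$, removes that case distinction entirely; there is no gap.
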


\begin{proof}
Most of this follows from the proof of Proposition~\ref{11}.
Apply $x_1\mapsto x_1+x_2$ to the Kirchhoff polynomial $x_1x_2+x_2x_3+x_3x_1$ of $K_3$; the result is $x_1^2+x_1(x_2+2x_3)+x_2x_3$.

If $\ch\KK=2$, then this is $x_1^2+x_2(x_1+x_3)$. 
If $2\in\KK$ is a unit, complete the square and scale $x_2$ by $2$ to arrive at $x_1^2-x_2^2+x_3^2$. 
In both cases the result is easily seen to be equivalent to $x_1x_2-x_3^2$.
\end{proof} 

 
\begin{prp}\label{17}
The numbers of equivalence classes for rank $3$ configurations $W$ for different values of $r_W^2$ are 
\[
|\Psi_3^3/_\simeq|=1,\quad
|\Psi_3^4/_\simeq|=2,\quad
|\Psi_3^5/_\simeq|=2,\quad
|\Psi_3^6/_\simeq|=1.
\]
Table \ref{87} lists the equivalence classes of $\psi_W$ that arise from normalized configuration matrices $A$ when $r_W=3$ and $r_W^2=\abs E$.
\end{prp}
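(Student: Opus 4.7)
The plan is to combine Proposition~\ref{2} with a case analysis on the normalized configuration matrix $A = (I_3 \mid A')$, where $A' \in \KK^{3 \times (r_W^2 - 3)}$. Writing the configuration form in this basis as
\[
Q_W = \operatorname{diag}(x_1, x_2, x_3) + \sum_{k=1}^{r_W^2 - 3} x_{3+k} \cdot c_k c_k^\intercal,
\]
with $c_k$ the $k$th column of $A'$, the contact equivalence class of $\psi_W = \det(Q_W)$ is determined by the $c_k$ modulo row scalings of $A$, change of basis on $W$, and general linear changes on the $x_e$. The boundary cases $r_W^2 \in \set{3, 6}$ follow immediately from Proposition~\ref{11}, yielding one class each.

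For $r_W^2 = 4$, the single column $c := A'$ must have at least two nonzero entries, otherwise $W^{\star 2}$ cannot reach dimension $4$. After row scaling and permutation of $E$, we may take $c = (1,1,1)^\intercal$ or $c = (1,1,0)^\intercal$. The first choice leads by direct computation to $\psi_W = e_3(x_1, x_2, x_3, x_4)$, the Kirchhoff polynomial of the $4$-cycle. The second choice makes $Q_W$ block-diagonal of shape $2+1$, so $\psi_W$ factors as a linear form times a rank-$2$ configuration polynomial of Hadamard dimension $3$; by Proposition~\ref{16}, this is equivalent to $x_1(y_1 y_2 - y_3^2)$. The two representatives are inequivalent since one is irreducible and the other is not.

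For $r_W^2 = 5$, write $A' = (a \mid b)$ with $a, b \in \KK^3$. The rank condition is equivalent to the $3 \times 2$ matrix $(a_i a_j, b_i b_j)_{1 \le i < j \le 3}$ having rank $2$. The plan is to enumerate the vanishing patterns and proportionality relations among the entries of $(a, b)$, reduce each pattern to a canonical shape by row scaling of $A$ combined with invertible linear substitution on $(x_4, x_5)$, and then apply further contact equivalence on all five variables to collapse the resulting list into exactly two classes, represented by the matrices $M_1$ and $M_2$ displayed in the introduction and listed in Table~\ref{87}. Inequivalence of $M_1$ and $M_2$ is then detected by an invariant of the projective hypersurface $V(\psi_W)$, for instance the dimension or irreducible decomposition of its singular locus.

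The main obstacle is the $r_W^2 = 5$ case: the combinatorial subdivision by vanishing patterns is appreciable, and contact equivalence is strictly coarser than the equivalence given by matroid automorphisms together with row and column scalings, so non-obvious identifications can occur across combinatorial types. The delicate point is to carry out the final collapse to two classes, and to produce an invariant distinguishing $M_1$ from $M_2$ that is stable under arbitrary linear changes of the coordinates $x_1, \dots, x_5$.
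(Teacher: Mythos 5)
Your handling of $r_W^2\in\set{3,6}$ via Proposition~\ref{11} and of $r_W^2=4$ (normalize the extra column to $(1,1,1)^\intercal$ or $(1,1,0)^\intercal$, then separate the two classes by irreducibility of the cubic) is essentially the paper's argument and is fine. The genuine gap is the case $r_W^2=5$, which is the heart of the proposition and which you leave as a plan: \enquote{enumerate the vanishing patterns \dots and collapse the resulting list into exactly two classes} is exactly the part that has to be proved, and you flag it yourself as the main obstacle. The paper's route is more structured than a raw enumeration of vanishing patterns. The organizing dichotomy is whether some pair among the three cross products $w^1\star w^2$, $w^1\star w^3$, $w^2\star w^3$ is linearly dependent, or all pairs are linearly independent. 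In the first case one may assume, after reordering, that $w^2\star w^3$ is proportional to $w^1\star w^2$, so $Q_W$ takes a one-parameter form $Q_\lambda$, and a symmetric row/column operation followed by a linear change of coordinates eliminates $\lambda$, giving the normal form $M_1=Q_0$. In the second case $Q_W$ takes a form $Q_{\lambda,\mu}$ with $\lambda,\mu\in\KK^*$, and a diagonal rescaling of the coordinates normalizes $(\lambda,\mu)$ to $(1,1)$, giving $M_2$. Without these two explicit reductions you have not shown that the count is $2$ rather than something larger; this is where the real work lies.

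Your proposed separating invariant also needs repair: both hypersurfaces have two-dimensional singular loci, so the dimension of the singular locus does not distinguish $M_1$ from $M_2$. The paper instead uses the ideal of submaximal minors: $I_2(Q_0)$ is computed explicitly in \eqref{56} as an intersection of two distinct primes, whereas $I_2(Q_{1,1})$ must be prime because the underlying matroid is $U_{3,5}$, which is $3$-connected (citing \cite[Thm.~4.37]{DSW21}); since $I_2(Q_W)$ cuts out the non-smooth locus of the configuration hypersurface, reducibility of this scheme is a contact invariant. You would need either this argument or an explicit primary decomposition on the $M_2$ side to complete the separation.
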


\begin{table}[H]
\caption{Equivalence classes for rank $r_W=3$ configurations}\label{87}
\begin{tabular}{ccm{50mm}c@{}}
\toprule
$\abs E=r^2_W$ & $A$ & conditions & $\psi_W\simeq\det(-)$ \\

\midrule
$3$ &
$\begin{smallpmatrix}
1 & 0 & 0\\
0 & 1 & 0\\
0 & 0 & 1
\end{smallpmatrix}$ & None &
$\begin{smallpmatrix}
y_1 & 0 & 0\\
0 & y_2 & 0\\
0 & 0 & y_3
\end{smallpmatrix}$ \\

\midrule
\multirow{3}{*}{$4$} & 
$\begin{smallpmatrix}
1 & 0 & 0  & a_1 \\
0 & 1 & 0 & a_2\\
0 & 0 & 1 & a_3
\end{smallpmatrix}$ &
$a_i=0$ for exactly one $i$. &
$\begin{smallpmatrix}
y_1 & y_4 & 0\\
y_4 & y_2 & 0\\
0   & 0   & y_3
\end{smallpmatrix}$ \\

\cmidrule{2-4}
 & $\begin{smallpmatrix}
1 & 0 & 0 & a_1\\
0 & 1 & 0 & a_2\\
0 & 0 & 1 & a_3
\end{smallpmatrix}$ &
$a_i\ne0$ for all $i$. &
$\begin{smallpmatrix}
y_1 & y_4 & y_4\\
y_4 & y_2 & y_4 \\
y_4 & y_4 & y_3
\end{smallpmatrix}$ \\

\midrule
\multirow{3}{*}{$5$} & $\begin{smallpmatrix}
1 & 0 & 0  & a_{1,1} & a_{1,2} \\
0 & 1 & 0 & a_{2,1} & a_{2,2}\\
0 & 0 & 1 & a_{3,1} & a_{3,2}
\end{smallpmatrix}$ & 
Exactly one pair of $\begin{smallpmatrix}a_{i,1}\cdot a_{j,1}\\a_{i,2}\cdot a_{j,2}\end{smallpmatrix}$, $i\neq j$, is linearly dependent. & 
$\begin{smallpmatrix}
y_1 & y_4 & y_5\\
y_4 & y_2 & 0\\
y_5   & 0 & y_3
\end{smallpmatrix}$ \\

\cmidrule{2-4}
 & $\begin{smallpmatrix}
1 & 0 & 0  & a_{1,1} & a_{1,2} \\
0 & 1 & 0 & a_{2,1} & a_{2,2}\\
0 & 0 & 1 & a_{3,1} & a_{3,2}
\end{smallpmatrix}$ & 
All pairs of $\begin{smallpmatrix}a_{i,1}\cdot a_{j,1}\\a_{i,2}\cdot a_{j,2}\end{smallpmatrix}$, $i\neq j$, are linearly independent.  &
$\begin{smallpmatrix}
y_1 & y_4 & y_4+y_5 \\
y_4 & y_2 & y_5 \\
y_4+y_5 & y_5 & y_3
\end{smallpmatrix}$ \\

\midrule
$6$ & $\begin{smallpmatrix}
1 & 0 & 0  & a_{1,1} & a_{1,2} & a_{1,3}\\
0 & 1 & 0 & a_{2,1} & a_{2,2} & a_{2,3}\\
0 & 0 & 1 & a_{3,1} & a_{3,2} & a_{3,3}
\end{smallpmatrix}$ & 
None & 
$\begin{smallpmatrix}
y_1 & y_4 & y_6\\
y_4 & y_2 & y_5 \\
y_6 & y_5 & y_3
\end{smallpmatrix}$ \\

\bottomrule
\end{tabular}
\end{table}

\begin{proof}
Let $W\subseteq\KK^E$ be a configuration of rank $r_W=3$ with normalized configuration matrix $A$.
By \eqref{7} and Lemma~\ref{40}, we may assume that
\[
3=r_W\le r_W^2=\abs{E}\le{r_W+1\choose 2}=6.
\]
The cases where $r_W^2\in\set{3,6}$ are covered by Proposition~\ref{11}.

\smallskip

Suppose now that $r_W^2=4$.
Up to reordering rows and columns, $A$ then has the form  
\[
A=
\begin{pmatrix}
1 & 0 & 0 & a_1\\
0 & 1 & 0 & a_2\\
0 & 0 & 1 & a_3
\end{pmatrix},\quad
a_1,a_2,a_3\in\KK,\quad
a_1a_2\ne0,
\]
and hence
\[
Q_A=
\begin{pmatrix}
x_1+a_1^2x_4 & a_1a_2x_4 & a_1a_3 x_4 \\
a_1a_2x_4 & x_2+a_2^2x_4 & a_2a_3x_4\\
a_1a_3x_4 & a_2a_3x_4 & x_3+a_3^2x_4
\end{pmatrix}.
\]
If $a_3=0$, then we can write, in terms of suitable coordinates $y_1,y_2,y_3,y_4$, 
\begin{equation}\label{55}
Q_A=
\begin{pmatrix}
y_1 & y_4 & 0 \\
y_4 & y_2 & 0 \\
0 &  0  & y_3
\end{pmatrix},\quad
\psi_A=\det(Q_A)=(y_1y_2-y_4^2)y_3.
\end{equation}
On the other hand, if $a_3\neq 0$, then we can write
\[
Q_{\lambda,\mu}:=Q_A=
\begin{pmatrix}
y_1 & y_4 & \mu y_4 \\
 y_4 & y_2 & \lambda y_4\\
\mu y_4 & \lambda y_4 & y_3
\end{pmatrix},\quad
\lambda:=\frac{a_3}{a_1},\quad
\mu:=\frac{a_3}{a_2}.
\]
Applying the coordinate change $(y_1,y_2,y_3,y_4)\mapsto(\frac{y_1}{\lambda^2},\frac{y_2}{\mu^2},y_3,\frac{y_4}{\lambda\mu})$, yields
\[
Q'_{\lambda,\mu}:=
\begin{pmatrix}
\frac{y_1}{\lambda^2} & \frac{y_4}{\lambda\mu} & \frac{y_4}{\lambda} \\
\frac{y_4}{\lambda\mu} & \frac{y_2}{\mu^2} & \frac{y_4}{\mu}\\
\frac{y_4}{\lambda} & \frac{y_4}{\mu} & y_3
\end{pmatrix},
\]
and hence by extracting factors from the first and second row and column
\[
\det(Q_{\lambda,\mu})\simeq\lambda^2\mu^2\det(Q'_{\lambda,\mu})=\det(Q_{1,1}).
\]
In contrast to $\psi_A$ in \eqref{55}, this cubic is irreducible since $\M_W=U_{3,4}$ is connected (see \cite[Thm.~4.16]{DSW21}).
In particular, the cases $a_3=0$ and $a_3\neq 0$ belong to different equivalence classes.

\smallskip

Suppose now that $r_W^2=5$. 
Then $A$ has the form
\[
A=
\begin{pmatrix}
1 & 0 & 0 & a_{1,1} & a_{1,2} \\
0 & 1 & 0 & a_{2,1} & a_{2,2} \\
0 & 0 & 1 & a_{3,1} & a_{3,2}
\end{pmatrix}.
\]

First suppose that, after suitably reordering the rows and columns of $A$, $w^1\star w^2$ and $w^2\star w^3$ are linearly dependent, and hence $w^1\star w^2$ and $w^1\star w^3$ are linearly independent. 
In terms of suitable coordinates $y_1,\dots,y_5$, we can write
\[
Q_\lambda:=Q_A=
\begin{pmatrix}
y_1 & y_4 &  y_5 \\
y_4 & y_2 & \lambda y_4 \\
y_5 & \lambda y_4 & y_3
\end{pmatrix},\quad\lambda\in \KK.
\]
By symmetric row and column operations, 
\[
\det(Q_\lambda)=\det
\begin{pmatrix}
y_1 & y_4 & y_5-\lambda y_1 \\
y_4 & y_2 & 0\\
y_5-\lambda y_1 & 0 & y_3-2\lambda y_5 +\lambda^2 y_1
\end{pmatrix}
\simeq\det(Q_0).
\]
One computes that the ideal of submaximal minors of $Q_0$ equals
\begin{equation}\label{56}
I_2(Q_0)=\ideal{y_1y_2-y_4^2,y_3,y_5}\cap\ideal{y_1y_3-y_5^2,y_2,y_4}.
\end{equation}

Suppose now that all pairs of $w^i\star w^j$ with $i<j$, are linearly independent.
In terms of suitable coordinates, $y_1,\dots,y_5$, we can write
\[
Q_{\lambda,\mu}=\begin{pmatrix}
y_1 & y_4 & \lambda y_4+\mu y_5 \\
y_4 & y_2 & y_5 \\
\lambda y_4+\mu y_5 & y_5 & y_3
\end{pmatrix},\quad
\lambda,\mu\in\KK^*.
\]
Applying the coordinate change $(y_1,y_2,y_3,y_4)\mapsto(\mu^2y_1,y_2,\lambda^2y_3,\mu y_4,\lambda y_5)$, yields
\[
Q'_{\lambda,\mu}=
\begin{pmatrix}
\mu^2y_1 & \mu y_4 & \lambda\mu(y_4+y_5)\\
\mu y_4 & y_2 & \lambda y_5 \\
\lambda\mu(y_4+y_5) & \lambda y_5 & \lambda^2 y_3
\end{pmatrix},
\]
and hence by extracting factors from the first and last row and column
\[
\det(Q_{\lambda,\mu})\simeq\frac1{\lambda^2\mu^2}\det(Q'_{\lambda,\mu})=\det(Q_{1,1}).
\]
The linear independence of all pairs of $w^i\star w^j$ with $i<j$ implies that $\M_W=U_{3,5}$ which is $3$-connected (see \cite[Table 8.1]{Oxl11}).
In contrast to $I_2(Q_0)$ in \eqref{56}, $I_2(Q_{1,1})$ must be a prime ideal (see \cite[Thm.~4.37]{DSW21}).
In particular, the two cases with $r_W^2=5$ belong to different equivalence classes.
\end{proof}
 
\section{Infinite number of classes for rank $4$ matroids}\label{71}

For rank $4$ configurations there are infinitely many equivalence
classes of configuration polynomials. 
For simplicity we prove this over the rationals, so in this section we assume $\KK=\QQ$.

Consider the family of normalized configuration matrices
\[
A:=
\begin{pmatrix}
1 & 0 & 0 & 0 & 1 & 1\\
0 & 1 & 0 & 0 & a_1 & b_1\\
0 & 0 & 1 & 0 & a_2 & 0  \\
0 & 0 & 0 & 1 & 0   & b_2 
\end{pmatrix},
\]
depending on parameters $a_1,a_2,b_1,b_2\in\QQ$ where $a_1a_2b_1b_2\neq 0$. 
We will see that it gives rise to an infinite family of polynomials 
\[
\psi_m:=\det(Q_m),\quad Q_m:=\begin{pmatrix}
y_1  &  y_5+y_6   &  y_5   &  m y_6  \\
y_5+y_6 &  y_2  &  y_5    &  y_6  \\
y_5          &  y_5 & y_3      &  0  \\
my_6 & y_6  & 0  & y_4 
\end{pmatrix},\quad m:=\frac{a_1}{b_1}\in\QQ,
\]
which are pairwise inequivalent for $\abs{m}>1$.


\begin{lem}\label{54}
With the above notation, we have $\psi_A\simeq\psi_m$.
\end{lem}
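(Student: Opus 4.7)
The plan is to exhibit an invertible diagonal matrix $D=\mathrm{diag}(d_1,d_2,d_3,d_4)$ and an invertible linear change of variables $y=\ell(x)$ so that $D\cdot Q_A\cdot D^\intercal$, rewritten in the coordinates $y_1,\dots,y_6$, is exactly $Q_m$. Since conjugating $Q_A$ corresponds to the basis change $D\cdot(w^1,\dots,w^4)^\intercal$ of $W$ and multiplies $\det(Q_A)$ by $\det(D)^2$, taking determinants then yields $\psi_A\simeq\psi_m$.

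First I would write out $Q_A$ entry by entry. The only off-diagonal entries involving $x_5$ are at positions $(1,2),(1,3),(2,3)$ with coefficients $a_1,a_2,a_1a_2$ respectively, and the only off-diagonal entries involving $x_6$ are at positions $(1,2),(1,4),(2,4)$ with coefficients $b_1,b_2,b_1b_2$; the $(3,4)$ entry is already zero, matching $Q_m$. Comparison with $Q_m$ forces the $x_5$-coefficients of $(1,2),(1,3),(2,3)$ to coincide after conjugation, which requires $d_1d_2a_1=d_1d_3a_2=d_2d_3a_1a_2$, and the $x_6$-coefficients of $(1,2)$ and $(2,4)$ to coincide, which requires $d_1d_2b_1=d_2d_4b_1b_2$. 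Normalizing $d_2=1$, these constraints are solved uniquely by $D=\mathrm{diag}(a_1,\,1,\,a_1/a_2,\,a_1/b_2)$, which is invertible since $a_1a_2b_1b_2\neq 0$.

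Next I would set $y_5:=a_1^2x_5$ and $y_6:=a_1b_1x_6$. A direct check then shows that the off-diagonal entries of $D\cdot Q_A\cdot D^\intercal$ become $y_5+y_6,\ y_5,\ y_5,\ y_6$ at $(1,2),(1,3),(2,3),(2,4)$ respectively, while at position $(1,4)$ the coefficient is $d_1d_4b_2x_6=a_1^2x_6=(a_1/b_1)y_6=my_6$, exactly the off-diagonal data of $Q_m$. Finally I would absorb the remaining diagonal contributions by defining
\[
y_1:=d_1^2(x_1+x_5+x_6),\ y_2:=d_2^2(x_2+a_1^2x_5+b_1^2x_6),\ y_3:=d_3^2(x_3+a_2^2x_5),\ y_4:=d_4^2(x_4+b_2^2x_6).
\]
Together with $y_5,y_6$ as above, this defines a triangular (hence invertible) linear change of variables in which the conjugated matrix becomes $Q_m$.

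The argument is essentially a bookkeeping exercise; there is no serious obstacle. The only point to verify with care is that the unique choice of $D$ forced by matching the three $x_5$-entries is simultaneously compatible with matching the $x_6$-entries at $(1,2)$ and $(2,4)$, and that this compatibility is exactly what produces the factor $m=a_1/b_1$ at the position $(1,4)$, accounting for the family parameter in $Q_m$.
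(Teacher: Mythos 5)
Your proof is correct and is essentially the same computation as the paper's: the paper first substitutes $z_i$ for the diagonal entries and rescales $x_5,x_6$, then extracts factors from rows and columns, which is exactly your diagonal conjugation $D\,Q_A\,D^\intercal$ (your $D=\mathrm{diag}(a_1,1,a_1/a_2,a_1/b_2)$ is $a_1$ times the paper's implicit $\mathrm{diag}(1,1/a_1,1/a_2,1/b_2)$). The forced choice of $D$, the resulting $y_5=a_1^2x_5$, $y_6=a_1b_1x_6$, and the appearance of $m=a_1/b_1$ at position $(1,4)$ all check out.
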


\begin{proof}
The configuration form associated to $A$ is given by 
\[
Q_A=\begin{pmatrix}
x_1+x_5+x_6             & a_1 x_5+b_1 x_6 & a_2 x_5   &  b_2x_6   \\
a_1 x_5+b_1 x_6 &       x_2+a_1^2x_5+b_1^2x_6       & a_1a_2x_5 & b_1b_2 x_6 \\
a_2 x_5         & a_1 a_2 x_5     &     x_3+a_2^2 x_5   & 0          \\
b_2x_6          &  b_1b_2x_6     &    0      &  x_4+b_2^2x_6       
\end{pmatrix}.
\]
The coordinate changes 
\begin{align*}
(z_1,\dots,z_6)&:=\tup{x_1+x_5+x_6,x_2+a_1^2x_5+b_1^2x_6,x_3+a_2^2 x_5,x_4+b_2^2 x_6,a_1 x_5,b_1 x_6},\\
(y_1,\ldots,y_6)&:=\tup{z_1,\frac{z_2}{a_1^2},\frac{z_3}{a_2^2},\frac{z_4}{b_2^2},\frac{z_5}{a_1},\frac{z_6}{a_1}}
\end{align*}
turn $Q_A$ into
\begin{align*}
Q_A&=\begin{pmatrix}
z_1       &    z_5+z_6       & \frac{a_2}{a_1}z_5    &   \frac{b_2}{b_1} z_6 \\
z_5+z_6   &      z_2         &   a_2 z_5             & b_2 z_6               \\
\frac{a_2}{a_1} z_5 & a_2z_5 &    z_3                &  0                    \\
\frac{b_2}{b_1} z_6 & b_2z_6 &    0                  & z_4
\end{pmatrix}\\
&=
\begin{pmatrix}
y_1&a_1(y_5+y_6)&a_2y_5&\frac{a_1b_2}{b_1}y_6\\
a_1(y_5+y_6)&a_1^2y_2&a_1a_2y_5&a_1b_2y_6\\
a_2y_5&a_1a_2y_5&a_2^2y_3&0\\
\frac{a_1b_2}{b_1}y_6&a_1b_2y_6&0&b_2^2y_4
\end{pmatrix},
\end{align*}
so that $\det(Q_A)=a_1^2a_2^2b_2^2\det(Q_m)$ by extracting factors from the last three rows and columns.
\end{proof}


\begin{prp}\label{53}
For $m,m'\in\QQ^*$, $\psi_m\simeq\psi_{m'}$ if and only if $m=m'$ or $mm'=1$.
\end{prp}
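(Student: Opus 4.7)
The plan is to prove the ``if'' and ``only if'' directions separately.

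The ``if'' direction is straightforward. When $m=m'$ the claim is trivial. For $mm'=1$, I would exhibit the equivalence by a matroid symmetry: the simultaneous transposition $(e_3\leftrightarrow e_4,\,e_5\leftrightarrow e_6)$ of elements of $E$ is a matroid automorphism realized on the configuration. Swapping the corresponding columns of $A$ and then restoring the identity block on the first four columns via the row swap $w^3\leftrightarrow w^4$ produces a configuration matrix $A''$ obtained from $A$ by replacing $(a_1,a_2,b_1,b_2)$ with $(b_1,b_2,a_1,a_2)$. This sends $m=a_1/b_1$ to $b_1/a_1=1/m$. Permuting elements of $E$ just permutes the variables of the configuration polynomial, so $\psi_A\simeq\psi_{A''}$, and Lemma~\ref{54} applied to both sides yields $\psi_m\simeq\psi_A\simeq\psi_{A''}\simeq\psi_{1/m}$.

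The ``only if'' direction is the substantive one. A direct check shows that the Hadamard powers of $W$ span all of $\KK^6$, so $r_W^2=6=\abs{E}$; thus Proposition~\ref{2} together with Lemma~\ref{41} force $\nu(W)=6$, meaning that every polynomial equivalent to $\psi_m$ involves at least six variables. Hence any equivalence $\psi_m\simeq\psi_{m'}$ is realized by some $\ell\in\GL_6(\QQ)$ and $\lambda\in\QQ^*$ with $\psi_{m'}=\lambda\cdot\psi_m\circ\ell$. The strategy is then to construct a $\GL_6\times\QQ^*$-invariant of $\psi_m$ that computes a function of $m$ separating orbits of the involution $m\mapsto 1/m$ (for instance $m+1/m$).

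A natural invariant is the geometry of the quartic hypersurface $V(\psi_m)\subseteq\PP^5$, in particular its singular locus. Since $\psi_m=\det Q_m$, this singular locus contains the projectivization of the rank-$\le 2$ locus of $Q_m$, cut out by the ideal $I_3(Q_m)$ of $3\times 3$ minors. I would compute this locus explicitly, identify its irreducible components (some of which are expected to be distinguished linear subspaces coming from the matroid structure, e.g.\ coming from the hyperplanes $\{y_5=0\}$ and $\{y_6=0\}$ along which $\psi_m$ factors as $y_3$ or $y_4$ times a cubic), and look for a cross-ratio on a distinguished projective line among them. The cross-ratio should be a rational function of $m$ invariant under $m\mapsto 1/m$; comparing its value for $\psi_m$ and $\psi_{m'}$ then gives a polynomial identity forcing $mm'(m-m')(mm'-1)=0$, hence the claim. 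The main obstacle is the explicit identification of the invariant: the singular locus of a quartic threefold can be intricate, and one must recognize a geometric datum that cleanly yields $m+1/m$. A more computational fallback is to view $\psi_{m'}=\lambda\cdot\psi_m\circ\ell$ as a system of polynomial equations in the $38$ parameters $(\ell,\lambda,m,m')$ and use Gr\"obner basis elimination of $\ell,\lambda$ to extract the desired relation between $m$ and $m'$ directly.
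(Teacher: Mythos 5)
Your ``if'' direction is correct and is essentially the paper's own argument: the symmetry $(a_1,a_2,b_1,b_2)\mapsto(b_1,b_2,a_1,a_2)$, realized by permuting elements of $E$ and renormalizing the configuration matrix, combined with Lemma~\ref{54}, gives $\psi_m\simeq\psi_{1/m}$. Your starting point for the ``only if'' direction is also the right one, and the same as the paper's: the corank\nobreakdash-$\ge2$ locus of $Q_m$, i.e.\ the scheme cut out by the $3\times3$ minors (what the paper denotes $I_2(Q_m)$), is carried to the corresponding locus for $Q_{m'}$ by any linear equivalence $\ell$.

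The gap is that you never actually extract the invariant. You posit that some cross-ratio on ``a distinguished projective line'' among the components should compute a function of $m$ invariant under $m\mapsto 1/m$, but you do not identify the line, the four points, or verify that such a configuration exists; as computed in the paper, the three minimal primes of $I_2(Q_m)$ have linear spans $\ideal{y_4,y_6}$, $\ideal{y_3,y_5}$ and the single hyperplane $\ideal{y_1+my_2-(m+1)y_5-(m+1)y_6}$, which is not obviously the data of four collinear points, so the ``should be'' is doing all the work. The paper instead proceeds concretely: it computes the primary decomposition $I_2(Q_m)=P_{m,1}\cap P_{m,2}\cap P_{m,3}$, notes that $\ell^*$ must permute these primes (with $P_{m,1}$ distinguished by its degree\nobreakdash-one part, so $\ell^*(P_{m,1})=P_{m',1}$), deduces that $\ell^*$ stabilizes $\ideal{y_3,y_5}$ and $\ideal{y_4,y_6}$ (or swaps them), and then compares coefficients of specific monomials in the degree\nobreakdash-$3$ generator of $P_{m,2}$ and in the linear generator of $P_{m,1}$ to force $m'\in\{1,m,1/m\}$ and, by symmetry in $\ell^{-1}$, $m\in\{1,m',1/m'\}$. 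Some such explicit computation is unavoidable here; your Gr\"obner-elimination fallback over $38$ parameters is not a proof as stated (and is computationally out of reach in that naive form), so the ``only if'' direction remains unproven in your write-up.
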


\begin{proof}
By a \textsc{Singular} computation, the primary decomposition of the ideal of submaximal minors of $Q_m$ reads
\[
I_2(Q_m)=P_{m,1}\cap P_{m,2}\cap P_{m,3}
\]
where 
\begin{align*}
P_{m,1}&=\langle y_1+my_2-(m+1)y_5-(m+1)y_6,\\
&y_2y_4-y_4y_5-y_4y_6+(m-1)y_6^2, my_2y_3-y_3y_5+(1-m)y_5^2-y_3y_6\rangle\\
P_{m,2}&=\ideal{y_6,y_4,y_1y_2y_3-y_5^2(y_1+y_2+y_3-2y_5)}\\
P_{m,3}&=\ideal{y_5,y_3,y_1y_2y_4-y_6^2(y_1+m^2y_2+y_4-2my_6)}
\end{align*}
Fix $m,m'\in\KK^*$ with $\psi_m\simeq\psi_{m'}$. 
Then there is an $\ell\in\GL_6(\KK)$ such that 
\[
\set{\ell^*(P_{m,i})\mid i\in\set{1,2,3}}=\set{\ell^*(P_{m',i})\mid i\in\set{1,2,3}}.
\]
Let us assume first that
\begin{equation}\label{19}
\ell^*(P_{m,1})=P_{m',1},\quad 
\ell^*(P_{m,2})=P_{m',2},\quad
\ell^*(P_{m,3})=P_{m',3}.
\end{equation}
Then $\ell^*$ stabilizes the vector spaces $\ideal{y_3,y_5}$ and $\ideal{y_4,y_6}$ and hence
\begin{align*}
\ell^*(y_3)& =\ell_{3,3}y_3+\ell_{3,5}y_5, & \ell^*(y_4)& =\ell_{4,4}y_4+\ell_{4,6}y_6, \\
\ell^*(y_5)&=\ell_{5,3}y_3+\ell_{5,5,}y_5, & \ell^*(y_6)&=\ell_{6,4}y_4+\ell_{6,6,}y_6.
\end{align*}
with non-vanishing determinants 
\begin{equation}\label{57}
\ell_{1,1}\ell_{2,2}-\ell_{1,2}\ell_{2,1}\ne 0,\quad 
\ell_{3,3}\ell_{5,5}-\ell_{3,5}\ell_{5,3}\ne 0,\quad 
\ell_{4,4}\ell_{6,6}-\ell_{4,6}\ell_{6,4}\ne 0.
\end{equation}
In degree $3$ the second equality in \eqref{19} yields
\begin{multline}\label{20}
(\ell_{3,3}y_3+\ell_{3,5}y_5)\sum_{i=1}^6\ell_{1,i}y_i\sum_{j=1}^6\ell_{2,j}y_j\\-(\ell_{5,3}y_3+\ell_{5,5}y_5)^2\tup{\sum_{i=1}^6(\ell_{1,i}+\ell_{2,i})y_i+(\ell_{3,3}-2\ell_{5,3})y_3+(\ell_{3,5}-2\ell_{5,5})y_5}\\
\equiv\lambda(y_1y_2y_3-y_5^2(y_1+y_2+y_3-2y_5))\mod\ideal{y_4,y_6},\quad\lambda\in\KK^*.
\end{multline}
By comparing coefficients of $y_1y_2y_5$ in \eqref{20}, we find $(\ell_{1,1}\ell_{2,2}+\ell_{1,2}\ell_{2,1})\ell_{3,5}=0$ which forces $\ell_{3,5}=0$ by \eqref{57}.
Comparing next the coefficients of the monomials 
\[
y_1^2,\quad y_2^2,\quad y_1y_5^2,\quad y_2y_5^2,
\]
in \eqref{20} we then obtain
\begin{align}\label{21}
\ell_{1,1}\ell_{2,1}&=0, & \ell_{1,2}\ell_{2,2}&=0,\\
\nonumber -\ell_{5,5}^2(\ell_{1,1}+\ell_{2,1})&=-\lambda, & -\ell_{5,5}^2(\ell_{1,2}+\ell_{2,2})&=-\lambda,
\end{align}
which yields
\begin{equation}\label{22}
\ell_{1,1}+\ell_{2,1}=\ell_{1,2}+\ell_{2,2}.
\end{equation}
In degree $1$ the first equality in \eqref{19} yields
\begin{multline}
\label{23}
\sum_{i=1}^6 \tup{(\ell_{1,i}+m\ell_{2,i})y_i} -(m+1)(\ell_{5,3}y_3+\ell_{5,5} y_5)-(m+1)(\ell_{6,4} y_4+\ell_{6,6} y_6)= \\
\mu \tup{y_1+m' y_2-(m'+1)y_5-(m'+1)y_6}.
\end{multline}
Comparing coefficients of $y_1$ and $y_2$ we find
\begin{align}
\label{24}
\ell_{1,1}+m\ell_{2,1}&=\mu, & \ell_{1,2}+m\ell_{2,2} & =m'\mu.
\end{align}
By equation~\eqref{21}, $\ell_{1,i}$ or $\ell_{2,i}$ must be zero for $i=1,2$.
Thus, we consider the following cases: 
\begin{asparaitem}

\item If $\ell_{1,1}=\ell_{1,2}=0$, then $\ell_{2,1}=\frac{\mu}{m}$ and $\ell_{2,2}=\frac{m'\mu}{m}$ by \eqref{24}, hence $\frac{\mu}{m}=\frac{m'\mu}{m}$ by \eqref{22}, so $m'=1$.

\item If $\ell_{1,1}=\ell_{2,2}=0$, then $\ell_{2,1}=\frac{\mu}{m}$ and $\ell_{1,2}={m'\mu}$ by \eqref{24}, hence $\frac{\mu}{m}={m'\mu}$ by \eqref{22}, so $m'=\frac{1}{m}$. 

\item If $\ell_{2,1}=\ell_{1,2}=0$, then $\ell_{1,1}={\mu}$ and $\ell_{2,2}=\frac{m'\mu}{m}$ by \eqref{24}, hence ${\mu}=\frac{m'\mu}{m}$ by \eqref{22}, so $m'=m$.

\item If $\ell_{2,1}=\ell_{2,2}=0$, then $\ell_{1,1}={\mu}$ and $\ell_{1,2}={m'\mu}$ by \eqref{24}, hence ${\mu}={m'\mu}$ by \eqref{22}, so $m'=1$.

\end{asparaitem}
A similar discussion applies, with the same consequences, to the case where
\begin{align*}
\ell(P_{m,1})&=P_{m',1}, & \ell(P_{m,2})&=P_{m',3}, & \ell(P_{m,3})&=P_{m',2}.
\end{align*}
In conclusion and by exchanging $\ell$ by $\ell^{-1}$, we find 
\[
m'\in\set{1,m,\frac{1}{m}},\quad m\in\set{1,m',\frac{1}{m'}}.
\]
Unless $m'=m$, we have $m'=\frac{1}{m}=\frac{b_1}{a_1}$.
In terms of the coordinates from the proof of Lemma~\ref{54}, we can write
\begin{align*}
\psi_A&=a_2^2b_2^2
\det\begin{pmatrix}
z_1       &    z_5+z_6       & \frac{z_5}{a_1}    &   \frac{z_6}{b_1} \\
z_5+z_6   &      z_2         &   z_5             & z_6               \\
\frac{z_5}{a_1} & z_5 & \frac{z_3}{a_2^2}  &  0                    \\
\frac{z_6}{b_1} & z_6 &    0                  & \frac{z_4}{b_2^2}
\end{pmatrix}\\
&\simeq
\det\begin{pmatrix}
z_1       &    z_5+z_6       & \frac{z_5}{a_1}    &   \frac{z_6}{b_1} \\
z_5+z_6   &      z_2         &   z_5             & z_6               \\
\frac{z_5}{a_1} & z_5 & z_3  &  0                    \\
\frac{z_6}{b_1} & z_6 &    0                  & z_4
\end{pmatrix}
\end{align*}
One can see that the morphism that leaves $z_1,z_2$ fixed, and interchanges the pairs $z_3\leftrightarrow z_4, z_5\leftrightarrow z_6, a_1\leftrightarrow b_1$ transforms this final matrix into a conjugate matrix. 
However, by Lemma \ref{54} the determinants of these two matrices are equivalent to $\psi_m$ and $\psi_{1/m}$ respectively, where $m=\frac{a_1}{b_1}$.
It follows that $\psi_m$ and $\psi_{1/m}$ are equivalent. 
\end{proof}


\begin{cor}\label{14}
For every $k\in\NN$, we have $|\Psi_{4+k}^{6+k}/_\simeq|=\infty$ over $\KK=\QQ$.
\end{cor}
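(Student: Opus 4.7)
The plan is to reduce the corollary to Proposition~\ref{53} by padding configurations with trivial direct summands, and then to separate contact equivalence classes via unique factorization. For each $m\in\QQ^*$, let $W_m\subseteq\QQ^6$ be the rank-$4$ configuration supplied by Lemma~\ref{54}, so that $\psi_{W_m}=\psi_m$ and $W_m\in\Psi_4^6$. Iterated application of Remark~\ref{12} produces a configuration
\[
W_m^{(k)}:=W_m\oplus\QQ^{\set{f_1,\dots,f_k}}\subseteq\QQ^{\set{1,\dots,6}\sqcup\set{f_1,\dots,f_k}}
\]
of rank $4+k$ with $r_{W_m^{(k)}}^2=6+k$ and $\psi_{W_m^{(k)}}=\psi_m\cdot x_{f_1}\cdots x_{f_k}$; hence $W_m^{(k)}\in\Psi_{4+k}^{6+k}$. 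It therefore suffices to prove that, as $m$ varies over $\NN_{\ge 2}$, the polynomials $\psi_m\cdot x_{f_1}\cdots x_{f_k}$ are pairwise contact inequivalent.

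The key preliminary step is to show that each $\psi_m$ is irreducible in $\QQ[y_1,\dots,y_6]$. I would do this by verifying that the matroid $\M_{W_m}$ represented by the normalized configuration matrix $A$ with $a_1a_2b_1b_2\ne 0$ is connected: the two obvious rank-$3$ circuits $\set{e_1,e_2,e_3,e_5}$ and $\set{e_1,e_2,e_4,e_6}$, together with the circuit contained in their symmetric difference obtained from the linear dependence $e_5-e_6=(a_1-b_1)e_2+a_2e_3-b_2e_4$, together witness that every pair of elements of the ground set lies in a common circuit. Connectivity of $\M_{W_m}$ then yields irreducibility of $\psi_m$ via \cite[Thm.~4.16]{DSW21}.

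With irreducibility in hand, suppose an equivalence $\psi_m\cdot\prod_i x_{f_i}\simeq\psi_{m'}\cdot\prod_i x_{f_i}$ is realized by some $\ell\in\GL_p(\QQ)$ and $\lambda\in\QQ^*$, so that
\[
\psi_m\cdot\prod_{i=1}^k x_{f_i}=\lambda\cdot(\psi_{m'}\circ\ell)\cdot\prod_{i=1}^k(x_{f_i}\circ\ell)
\]
in $\QQ[x_1,\dots,x_p]$. Since $\ell$ is invertible, each $x_{f_i}\circ\ell$ is a non-zero linear form and $\psi_{m'}\circ\ell$ is irreducible of degree $4$. Comparing irreducible factorizations in the UFD $\QQ[x_1,\dots,x_p]$ forces $\psi_{m'}\circ\ell=c\cdot\psi_m$ for some $c\in\QQ^*$, i.e.\ $\psi_m\simeq\psi_{m'}$. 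By Proposition~\ref{53} this gives $m=m'$ or $mm'=1$, and the latter is impossible for $m,m'\in\NN_{\ge 2}$. Hence $\Psi_{4+k}^{6+k}$ contains infinitely many equivalence classes, proving the corollary. The main obstacle in the argument is the irreducibility step, which does not follow formally from the earlier material and must be justified separately; once it is in place, Remark~\ref{12}, Proposition~\ref{53}, and unique factorization combine routinely.
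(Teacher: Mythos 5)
Your proof is correct and follows the paper's own route: pad via Remark~\ref{12} to land in $\Psi_{4+k}^{6+k}$ and then reduce to Proposition~\ref{53}. The only difference is that you make explicit --- via irreducibility of $\psi_m$ (from connectivity of $\M_{W_m}$ and \cite[Thm.~4.16]{DSW21}) together with unique factorization in $\QQ[x_1,\dots,x_p]$ --- the step the paper dispatches with the phrase \enquote{for reasons of degree}; this is a legitimate and welcome elaboration rather than a divergence, and your restriction to $m\in\NN_{\ge 2}$ correctly sidesteps the $mm'=1$ ambiguity.
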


\begin{proof}
Applying the construction from Remark~\ref{12} yields configurations $W$ with $r_W=4+k$ and $r_W^2=6+k$ which give rise to the infinite family of polynomials $\psi_{m,k}=\psi_m\cdot y_{7}\cdots y_{7+k}$, contact equivalent to elements of $\Psi_{4+k}^{6+k}$.
For reasons of degree, $\psi_{m,k}\simeq\psi_{m',k}$ is equivalent to $\psi_m\simeq\psi_{m'}$, so the claim follows from Proposition~\ref{53}.
\end{proof}

\printbibliography
\end{document}